\theoremstyle{plain}
\newtheorem{theorem}[equation]{Theorem}
\newtheorem{proposition}[equation]{Proposition}
\theoremstyle{definition}
\newtheorem{definition}[equation]{Definition}
\newcommand{\IC}{\mathbb{C}}
\newcommand{\IQ}{\mathbb{Q}}
\newcommand{\IZ}{\mathbb{Z}}
\newcommand{\tr}{\mathrm{tr}}
\renewcommand{\deg}{\mathrm{deg}}
\def\d/{/\mspace{-6.0mu}/}
\newcommand{\db}{\bar{\partial}}
\newcommand{\p}{\partial}
\begin{document}

\title{$C^{\infty}$ Stability, Canonical Maps, and Discrete Dynamics}
\author{Mark Stern}

\footnotetext{Duke University, Department of Mathematics;  
e-mail:  stern@math.duke.edu,\\ partially supported by NSF grant DMS 1005761}
\date{}

\maketitle
\setcounter{section}{0}
\section{Introduction}
A fundamental question in Hodge theory is: 'Which complex vector bundles on a smooth projective variety admit a holomorphic structure?' If $E$ is a holomorphic vector bundle on a projective variety $M$, then the degree $2p$ component of the chern character of $E$, which we denote $ch_p(E)$, lies in $F^pH^{2p}(M,\IC)\cap \bar F^pH^{2p}(M,\IC)$, where $F^{\cdot}$ denotes the Hodge filtration. This necessary condition on the existence of a holomorphic structure is both nonlinear and nonlocal, making it extremely difficult to see how to utilize this hypothesis in analytic approaches to constructing holomorphic structures. In this note we ask an easier converse question: If $ch_p(E)\not\in F^{p-j}H^{2p}(M,\IQ)\cap \bar F^{p-j}H^{2p}(M,\IQ),$ how is $j$ - the distance to the $(p,p)$ axis in the Hodge diamond - reflected in singularity formation  in analytic attempts to construct (the nonexistent) holomorphic structures? In order to describe our results, we first introduce one natural scheme for seeking holomorphic structures.  

If $E'$ is a holomorphic bundles, and $L$ is an ample line bundle, then for $k$ sufficiently large, 
$$dim(H^0(E'\otimes L^k)) = \chi(M,E'\otimes L^k),$$
where $\chi$ denotes the holomorphic Euler characteristic. Viewed as the index of the  elliptic operator $\db+\db^*$, the holomorphic Euler characteristic can be extended to arbitrary hermitian complex bundles endowed with a semiconnection. For an arbitrary complex hermitian bundle $E$ on a Kahler manifold $(M,g)$, we define the energy 
$$\Phi^{0,1}(h,H,\db_A) = \sum_a\|\db_A s_a\|^2,$$
where $\db_A$ is a semiconnection on $E$, $H$ is a $\chi(M,E)$ dimensional subspace of $C^\infty(M,E)$, $\{s_a\}_a$ is a unitary $L_2$ basis for $H$, and the $L_2$ norms are defined by $h$ and the Kahler metric. If $E$ admits a holomorphic structure with $dim(H^0(E)) = \chi(M,E)$, then $\Phi^{0,1}(h,H,\db_A)=0$ for an appropriate choice of $H$ and $\db_A$. If, on the other hand, $\Phi^{0,1}$ achieves $0$ for some $H$ whose sections globally generate $E$, then $E$ admits a holomorphic structure. Hence a natural problem is to understand minimizing sequences of $\Phi^{0,1}$. When is $\Phi^{0,1}$ bounded away from zero? If $0$ is the infimum of $\Phi^{0,1}$, is it achieved? In this note, we begin studying the relation between the distance of $ch(E)$ from the $(p,p)$ axis in the Hodge diamond to the formation of singularities in minimizing sequences of $\Phi^{0,1}$. Our treatment naturally leads to the consideration of two different types of singularity formation. The first is simply the loss of regularity of the sections of $H$ with respect to a fixed background connection. The second is really a stability issue. We call $H\subset C^\infty(M,E)$ {\em admissible} if its sections globally generate $E$. Let $\Pi_H$ denote the the Schwartz kernel of the $L_2$ unitary projection onto $H$. The global generation of $E$ by $H$ is equivalent to the invertibilty of $\Pi_H(x,x)^{-1}$. (Evaluate on the diagonal before inverting.) We call a sequence of subspaces $\{H_n\}_n$  {\em stable} if $\Pi_{H_n}(x,x)^{-1}$ is uniformly bounded. 

In the special case of $E$ holomorphic, $L$ ample, Donaldson \cite{D1}  considered the discrete dynamical system of metrics $h_n$ on $E\otimes L^k$ defined by setting $H= H^0(M,E\otimes L^k)$, and setting
$$h_{n+1}(\cdot,\cdot) := \frac{rank(E)vol(M)}{dim(H)}h_{n}(\cdot, \Pi_{H,h_n}(x,x)^{-1}\cdot ),$$
where now $\Pi_{H,h_n}$ denotes the Schwartz kernel of the $L_2$ projection onto $H$ unitary with respect to the metric $h_n$. Fixed points of this dynamical system are called {\em balanced metrics}. Donaldson conjectured that the existence of balanced metrics was equivalent to a stability condition. Wang  \cite{w}  (see also \cite{K}) proved that holomorphic bundles on projective manifolds admit balanced metrics if and only if they are Gieseker polystable.

The metrics, $h_{n+1}$, in Donaldson's dynamical system arise from pulling back the metric on the universal bundle over the Grassmannian $Gr(rank(E\otimes L^k),H)$ by the canonical map $i_{(h_n,H)}$ determined by $(h_n,H)$. We introduce two discrete dynamical systems, $T^{0,1}$ and $T$, which extend Donaldson's metric dynamical system for holomorphic bundles to dynamical systems for $C^\infty$ complex bundles. In the system $T$, the metric, the subspace $H$ of $C^\infty(M,E)$, and the connection vary. The flow $T^{0,1}$ is complementary to Donaldson's in that the metric is fixed and only the subspace $H$ of $C^\infty(M,E)$  and the connection vary. In these new systems, the connection is also pulled back from the Grassmannian by a canonical map. The dynamical system $T^{0,1}$ has the amusing feature of factoring the nonlinear problem of minimizing $\Phi^{0,1}$ into an elementary linear pde problem and  an explicitly (algebraicly) solvable nonlinear problem.

 Using comparison to these dynamical systems, we obtain relations between Hodge structures and rate of singularity formation such as the following.

\begin{theorem}\label{A}
Suppose that $ch_p(E)\not\in F^{p-j+1}H^{2p}(M)\cap \bar F^{p-j+1}H^{2p}$. Let $\{H_n\}_n $ be a stable sequence of subspaces with $L_2$ unitary basis $\{s_{a,n}\}_a$ and $\{A_n\}_n $
 be a sequence of smooth hermitian connections. If $\sum_a|\db_{A_n}s_{a,n}|^2\stackrel{L_{p}}{\rightarrow} 0,$ then 
$\sum_a|d_{B_n}s_{a,n}|^2\stackrel{L_{p}}{\rightarrow} \infty,$ for any sequence of hermitian connections $\{B_n\}_n$. 
If $\sum_a|\db_{A_n}s_{a,n}|^2\stackrel{L_{\infty}}{\rightarrow} 0,$ then 
$\sum_a|d_{B_n}s_{a,n}|^2\stackrel{L_{p-\frac{j}{2}}}{\rightarrow} \infty,$ for any sequence of hermitian connections $\{B_n\}_n$. 
\end{theorem}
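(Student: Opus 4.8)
The plan is to replace the arbitrary connections $A_n,B_n$ by the canonical connection $\hat A_n$ attached to $(h,H_n)$ via the Grassmannian map $i_{(h,H_n)}$, for which the distance of $ch_p(E)$ from the $(p,p)$ axis is visible in a pointwise curvature estimate. Write $\phi_n=\sum_a|\db_{\hat A_n}s_{a,n}|^2$ and $\psi_n=\sum_a|d_{\hat A_n}s_{a,n}|^2$ for its $\db$- and full energy densities, and note $\phi_n\le\psi_n$ pointwise. I would use two properties of $\hat A_n$. First, since it is pulled back from the universal connection it minimizes the Dirichlet energy, so $\psi_n\le\sum_a|d_{B_n}s_{a,n}|^2$ pointwise for every Hermitian $B_n$, which transports the conclusion from $\hat A_n$ to all $B_n$; and its $\db$-energy is dominated by that of any semiconnection, so the hypothesis forces $\phi_n\to 0$ in the relevant norm. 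The second, analytic, input is the curvature estimate coming from the projection formula $F_{\hat A_n}=P\,(dP\wedge dP)\,P$: because one factor of $F^{0,2}_{\hat A_n}$ is the small quantity $\db_{\hat A_n}s$ while the other is a full derivative, one obtains $|F^{0,2}_{\hat A_n}|+|F^{2,0}_{\hat A_n}|\le C\,\|\Pi_{H_n}(x,x)^{-1}\|\,\phi_n^{1/2}\psi_n^{1/2}$ together with $|F^{1,1}_{\hat A_n}|\le C\,\|\Pi_{H_n}(x,x)^{-1}\|\,\psi_n$, with $C$ uniform in $n$ by stability.

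Next I would extract the Hodge obstruction by Chern--Weil. Represent $ch_p(E)$ by $\tfrac{1}{p!}(\tfrac{i}{2\pi})^p\tr F_{\hat A_n}^p$ and pair with a fixed smooth closed form $\omega$ of pure bidegree $(n-a,n-2p+a)$ chosen so that $[\omega]$ pairs nontrivially with a component of $ch_p(E)$ in $H^{a,2p-a}$ for some $a$ with $|a-p|\ge j$; such a nonzero component exists precisely because $ch_p(E)\notin F^{p-j+1}\cap\bar F^{p-j+1}$, and nondegeneracy of the cup pairing on complementary Hodge pieces supplies $\omega$. Since $\omega$ is closed and of pure type, $\int_M\tr(F_{\hat A_n}^p)\wedge\omega$ equals the $n$-independent nonzero constant $c'=p!(2\pi/i)^p\langle ch_p(E),[\omega]\rangle$, and only the bidegree $(a,2p-a)$ part of $\tr F^p$ contributes. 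Every monomial there has $q$ factors of $F^{2,0}$, $r$ of $F^{1,1}$ and $s$ of $F^{0,2}$ with $q-s=a-p$ and $q+r+s=p$, hence at least $q+s\ge|a-p|\ge j$ off-diagonal factors. Feeding in the estimates, bounding the $\le p-j$ middle factors by $\psi_n$ and using $\phi_n\le\psi_n$ to absorb the monomials with more than $j$ off-diagonal factors, the dominant term is $\phi_n^{j/2}\psi_n^{p-j/2}$, so $0<|c'|\le C\int_M\phi_n^{j/2}\psi_n^{p-j/2}\,dV$ for all large $n$.

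The two conclusions are then Hölder's inequality applied to this master estimate. For the first statement, if $\|\psi_n\|_{L_p}$ failed to diverge we could pass to a subsequence with $\int\psi_n^p\le K$; Hölder with conjugate exponents $\tfrac{2p}{j}$ and $\tfrac{2p}{2p-j}$ gives $\int\phi_n^{j/2}\psi_n^{p-j/2}\le(\int\phi_n^p)^{j/(2p)}(\int\psi_n^p)^{(2p-j)/(2p)}\to 0$, contradicting $|c'|>0$, and $\psi_n$ is replaced by $\sum_a|d_{B_n}s_{a,n}|^2$ by minimality. For the second, $\int\phi_n^{j/2}\psi_n^{p-j/2}\le\|\phi_n\|_\infty^{j/2}\int\psi_n^{p-j/2}$ forces $\int\psi_n^{p-j/2}\ge|c'|/(C\|\phi_n\|_\infty^{j/2})\to\infty$, which is exactly $\|\psi_n\|_{L_{p-j/2}}\to\infty$; minimality again upgrades this to every $B_n$.

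The genuine difficulties, which I would treat as the heart of the argument rather than the Hölder bookkeeping, are the two structural inputs of the first paragraph. The curvature estimate must be read off the explicit form of the pulled-back connection, and the decisive point is the bilinear bound on $F^{0,2}_{\hat A_n}$ with exactly one small factor: it is this that produces the exponents $p$ and $p-\tfrac{j}{2}$, whereas a cruder bound $|F^{0,2}_{\hat A_n}|\le C\phi_n$ would yield only a weaker power of $\psi_n$. I expect the subtlest point to be reconciling the two comparison properties — that a single canonical connection simultaneously minimizes the full energy (needed to pass to arbitrary $B_n$) and has its $\db$-energy controlled by the hypothesis on $A_n$ — which is exactly the comparison with the dynamical systems $T$ and $T^{0,1}$ advertised in the introduction, and where the stability of $\{H_n\}$ enters to keep all constants uniform.
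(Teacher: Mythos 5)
Your proposal is correct and follows essentially the same route as the paper's proof: pass to the canonical (balanced) connection $d_{T_{(h,H_n)}A_n}$ pulled back from the Grassmannian, use its explicit curvature formula to bound each off-diagonal curvature factor by $\phi_n^{1/2}\psi_n^{1/2}$ and $F^{1,1}$ by $\psi_n$ (constants uniform by stability), extract a nonzero $n$-independent lower bound from the Hodge-theoretic hypothesis, and conclude via H\"older together with the pointwise minimality of the canonical connection to pass to arbitrary $B_n$. The only cosmetic difference is that the paper pairs $(\tr F^p)^{p-i,p+i}$ in $L_2$ against the harmonic representative of the class (Cauchy--Schwarz), whereas you wedge against a fixed closed form of complementary pure bidegree; both produce the same master estimate and the same exponents.
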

We  also have elementary information about lower bounds for $\Phi^{0,1}$.
\begin{theorem}\label{B}
If $c_1(E)\not\in F^1H^{2}\cap \bar F^1H^{2}$, then  $\ln(\Phi^{0,1})$ is bounded below on stable sequences. 
\end{theorem}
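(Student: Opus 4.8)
The plan is to detect the nonvanishing of the $(0,2)$–part of $c_1(E)$ directly in the curvature of the semiconnection and then to show that along a stable sequence this obstruction is carried entirely by $\db_A s_a$. Write $\epsilon_a:=\db_A s_a$ and let $F^{0,2}=\db_A^2$ be the (tensorial) $(0,2)$–part of the curvature of the Chern connection determined by $h$ and $\db_A$. Since $\Tr F^{0,2}$ is $\db$–closed and $\tfrac{i}{2\pi}[\Tr F_A]$ represents $c_1(E)$ in de Rham cohomology, $\tfrac{i}{2\pi}\Tr F^{0,2}$ represents $c_1(E)^{0,2}\in H^{0,2}(M)$; the hypothesis $c_1(E)\notin F^1H^2\cap \bar F^1H^2=H^{1,1}$ says precisely that this class is nonzero. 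By Serre duality I fix once and for all a smooth $\db$–closed $\beta\in\Omega^{n,n-2}(M)$ with
$$c_0:=\int_M \Tr F^{0,2}\wedge\beta\neq 0,$$
and because $\beta$ is $\db$–closed this period depends only on the two cohomology classes; hence it is one and the same nonzero constant for every semiconnection on $E$ and every member of the sequence.

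Next I would use stability to express $F^{0,2}$ through the generating sections. Stability makes $\Pi_H(x,x)$ uniformly invertible, so the evaluation map $\mathrm{ev}\colon M\times H\to E$ has the uniformly bounded right inverse $\mathrm{ev}^{*}\Pi_H^{-1}$, i.e. $\mathrm{ev}\,\mathrm{ev}^{*}\Pi_H^{-1}=\mathrm{Id}_E$. Applying $\db_A$ twice to $\mathrm{ev}$ and using the flatness of $M\times H$ gives $F^{0,2}\circ\mathrm{ev}=\db_A\epsilon$ (with $\epsilon=\db_A\,\mathrm{ev}$), whence the exact reconstruction $F^{0,2}=(\db_A\epsilon)\,\mathrm{ev}^{*}\Pi_H^{-1}$. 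In particular, if $\Phi^{0,1}=0$ on a stable subspace then $\epsilon=0$, so $F^{0,2}=0$ and $c_0=0$, contradicting $c_1(E)^{0,2}\neq0$; the theorem is the quantitative refinement of this. Substituting the reconstruction into the period and integrating by parts against $\beta$ (the boundary term dropping because $\db\beta=0$) moves the derivative off $\epsilon$ and yields the clean identity
$$c_0=\int_M \Tr\!\big[\,\epsilon\,\db_A(\mathrm{ev}^{*}\Pi_H^{-1})\,\big]\wedge\beta .$$
This already carries one explicit factor $\epsilon_a=\db_A s_a$, so a Cauchy–Schwarz estimate gives $|c_0|\le C\,(\Phi^{0,1})^{1/2}\,\big\|\db_A(\mathrm{ev}^{*}\Pi_H^{-1})\big\|_{L_2}$; the goal is to bound the second factor uniformly, for then $\Phi^{0,1}\ge (|c_0|/C')^2$ and $\ln\Phi^{0,1}$ is bounded below along the sequence.

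The hard part will be exactly this last factor, and it is where the hypotheses genuinely bite. The derivative $\db_A(\mathrm{ev}^{*}\Pi_H^{-1})$ exposes the full $(1,0)$–derivatives $\partial_A s_a$ of the sections, i.e. the \emph{complementary} (holomorphic) energy, and these are not controlled by stability alone — indeed Theorem A shows that the total energy \emph{must} blow up in the regime $\Phi^{0,1}\to0$, so no crude pointwise or $L_2$ bound on this factor is available and the naive estimate is merely a tautology producing the fixed constant $c_0$. The mechanism I would rely on is the comparison with the dynamical system $T^{0,1}$: replacing $\db_A$ by the canonical, Grassmannian–pulled–back semiconnection without increasing $\Phi^{0,1}$, and using the projection identities $(1-P)\mathrm{ev}^{*}=0$ and $\mathrm{ev}\,\mathrm{ev}^{*}\Pi_H^{-1}=\mathrm{Id}_E$ for $P=\mathrm{ev}^{*}\Pi_H^{-1}\mathrm{ev}$, so that every surviving large derivative appears as $\db_A P$ — a derivative of the uniformly bounded projection — paired against the fixed smooth $\db$–closed $\beta$. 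The single delicate point, which I expect to be the real content of the proof, is to show that after this bookkeeping the complementary energy \emph{decouples}: that the fixed nonzero period $c_0$ cannot be generated by an arbitrarily small $\epsilon$ with the remaining factors controlled only by the stability constant and the normalization $\int_M\Tr\Pi_{H_n}=\dim H$, rather than by the uncontrolled holomorphic energy. Once that control is in place the Hodge–theoretic normalization of $c_0$ and the Cauchy–Schwarz step are routine.
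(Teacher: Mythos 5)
Your setup---the $\db$-closed pairing form $\beta$, the reconstruction $F^{0,2}=(\db_A\epsilon)\,\mathrm{ev}^{*}\Pi_H^{-1}$, and the Cauchy--Schwarz step---reproduces, in different notation, exactly the paper's inequality (\ref{lwrbnd}), $0\neq\|h^{0,2}\|^{2}\leq C\,\Phi^{0,1}\,\Phi$, which is the content of Theorem \ref{ch1e} (Theorem \ref{A} at $p=1$), not of Theorem \ref{B}. As you observe yourself, that estimate is perfectly consistent with $\Phi^{0,1}\to 0$ provided the complementary energy blows up, so the whole theorem rests on the ``decoupling'' step you leave open---and that step is a genuine gap, not a routine one. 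Moreover, the mechanism you propose for closing it cannot work as stated: $\db_A(\mathrm{ev}^{*}\Pi_H^{-1})$ and $\db_A P$ are \emph{derivatives} of projections, and these consist precisely of the uncontrolled terms $\langle\cdot,\p_A s_a\rangle$; stability bounds $\Pi_H^{-1}$ pointwise but gives no bound at all on its derivatives, so no rearrangement of projection identities paired against $\beta$ can confine the fixed period $c_0$ to quantities controlled by the stability constant alone.

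The idea you are missing is topological and is special to first Chern classes: pair the $(1,1)$-part of the curvature of the \emph{balanced} connection with $\omega^{m-1}$. From (\ref{canF11}) and (\ref{trf}) one obtains the degree identity
\begin{equation*}
\deg(E)=\frac{(m-1)!}{2\pi}\int_M\bigl(\langle\p_{A_n}s_a,\Pi^{-1}\p_{A_n}s_b\rangle-\langle\db_{A_n}s_a,\Pi^{-1}\db_{A_n}s_b\rangle\bigr)Q_{ba}\,dv,
\end{equation*}
whose left-hand side is a constant independent of $n$. On a stable sequence this pins the $G$-part of the holomorphic energy to the $G$-part of the antiholomorphic energy up to that additive constant (and multiplicative constants bounded via stability): if $\Phi^{0,1}(h,H_n,\db_{A_n})\to 0$, then $\Phi_G^{0,1}\to 0$, hence $\Phi_G^{1,0}$---equivalently the \emph{full} energy of the balanced connection $d_{T_{(h,H_n)}A_n}$---remains bounded. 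Feeding these balanced connections (made hermitian using stability) back into Theorem \ref{ch1e} as the connections $B_n$ produces the contradiction, since that theorem forces $\Phi(h,H_n,d_{B_n})\to\infty$ for \emph{every} hermitian sequence $B_n$. So the decoupling you hoped for is real, but its source is the identity ``$\Phi_G^{1,0}-\Phi_G^{0,1}=\mathrm{const}\cdot\deg(E)$,'' i.e.\ the fact that $\tr F^{1,1}$ against $\omega^{m-1}$ computes a topological invariant---an input entirely absent from your proposal, and one that cannot be replaced by bookkeeping with bounded projections.
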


In each case, our results reduce to comparison to canonical connections associated with canonical maps to Grassmann manifolds. 
Hence we begin in Section \ref{canmap} with a discussion of canonical maps and canonical connections.  

Minimizers of $\Phi^{0,1}$ and $\Phi$ are fixed points of $T^{0,1}$ and $T$ respectively. We call a semiconnection $\db_A$ {\em balanced} with respect to $(h,H)$ if $(H,\db_A)$ is a fixed point of $T^{0,1}$. Equivalently, $\db_A$ is balanced  with respect to $(h,H)$ if it is the pullback via $i_{(h,H)}$ of the canonical connection on the universal bundle on the Grassmannian. We include a digression in Section \ref{2ndorder} on the relation between canonical maps, balanced connections, and harmonic maps. 

After proving Theorems \ref{A} and \ref{B} in Section \ref{obstructions}, we conclude with several tangentially related observations springing from the study of canonical connections rather than from Hodge theoretic considerations.    For example, let $L:= \nabla_A^*\nabla_A +W$, for $W\in C^\infty(End(E))$. Let $\{\psi_j\}_{j=1}^\infty$ be an $L_2-$unitary eigenbasis $L^2(M,E)$, with $L\psi_j = \lambda_j\psi_j$. Then 
\begin{theorem}\label{entropy}
\begin{equation}\label{entreq}F_{A } =  (4\pi t)^{n/2}\sum_{\lambda_j}e^{-t\lambda_j }d_A\psi_j\langle \cdot,d_A\psi_j\rangle + O(t) .  \end{equation}
\end{theorem}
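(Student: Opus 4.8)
The plan is to recognize the spectral sum appearing on the right of \eqref{entreq} as the restriction to the diagonal of the Schwartz kernel of an explicit heat operator, and then to read off its short--time asymptotics from the Minakshisundaram--Pleijel parametrix. Indeed, writing $K_t(x,y)=\sum_j e^{-t\lambda_j}\psi_j(x)\otimes\psi_j(y)^\dagger$ for the kernel of $e^{-tL}$, one checks that for $\omega\in C^\infty(T^*M\otimes E)$ the operator $\nabla_A e^{-tL}\nabla_A^{*}$ sends $\omega$ to $\sum_j e^{-t\lambda_j}(d_A\psi_j)\,\langle\omega,d_A\psi_j\rangle$, since $d_A=\nabla_A$. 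Hence the operator on the right of \eqref{entreq} is exactly $(4\pi t)^{n/2}\,\nabla_A e^{-tL}\nabla_A^{*}$ acting on $1$--forms. Its diagonal kernel lies in $T^*M\otimes T^*M\otimes\End(E)$; since $F_A$ is an $\End(E)$--valued $2$--form, the asserted identity is the statement that the skew, i.e. $\Lambda^2T^*M$, component of this diagonal kernel equals $F_A+O(t)$, the symmetric part being irrelevant to the comparison.

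First I would insert the standard parametrix $K_t(x,y)\sim (4\pi t)^{-n/2}e^{-d(x,y)^2/4t}\big(u_0(x,y)+t\,u_1(x,y)+\cdots\big)$, where $u_0$ is parallel transport along the minimizing geodesic times the inverse fourth root of the Van Vleck--Morette determinant and the $u_i$ solve the usual transport equations. Applying $\nabla_A$ in the $x$--variable and in the $y$--variable and then setting $x=y$, one organizes the outcome by powers of $t$. The most singular contribution comes from both derivatives landing on the Gaussian: a short computation gives $\nabla^x_\mu\nabla^y_\nu e^{-d^2/4t}\big|_{x=y}=\tfrac{g_{\mu\nu}}{2t}+\cdots$, which is symmetric in $(\mu,\nu)$ and so drops out of the $\Lambda^2$ component. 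Because each first derivative of $e^{-d^2/4t}$ vanishes on the diagonal, the cross terms in which one derivative hits the Gaussian and the other the amplitude also vanish at $x=y$, and the $t\,u_1$ term contributes only the symmetric multiple $\tfrac12 g_{\mu\nu}u_1(x,x)$ to this order. Thus after multiplying by $(4\pi t)^{n/2}$ the only surviving contribution to the skew part at order $t^0$ is $\nabla^x_{[\mu}\nabla^y_{\nu]}u_0(x,y)\big|_{x=y}$, with every remaining term $O(t)$.

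It then remains to identify this coefficient. Working in Fock--Schwinger (radial) gauge and geodesic normal coordinates centred at a point $x_0$, where $A$ vanishes at $x_0$ and $\partial_\lambda A_\mu(x_0)$ is a fixed multiple of $F_{\lambda\mu}(x_0)$, the parallel--transport factor satisfies $\nabla^x_\mu u_0|_{x=y}=\nabla^y_\nu u_0|_{x=y}=0$, while its mixed second covariant derivative reproduces the curvature, $\nabla^x_{[\mu}\nabla^y_{\nu]}u_0(x,y)\big|_{x=y}=\tfrac12 F_{\mu\nu}(x_0)$. This is the usual infinitesimal holonomy computation, and it is precisely the mechanism by which curvature enters: conceptually, the discrepancy between $\nabla_A e^{-tL}\nabla_A^{*}$ and a scalar heat flow is governed by the commutator $[\nabla_A,L]$, which is of curvature order. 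Combining this with the normalization $(4\pi t)^{n/2}$ and the identification of the skew component with the $2$--form $F_A$ yields \eqref{entreq}.

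The main obstacle is analytic bookkeeping rather than a single hard idea. One must justify that the finite parametrix approximates $K_t$ well enough that, after applying two covariant derivatives and restricting to the diagonal, the neglected tail is genuinely $O(t)$ uniformly on $M$; this requires the standard Duhamel estimates for $\nabla_A^{\otimes 2}(K_t-\text{parametrix})$ together with Gaussian off--diagonal bounds. The second delicate point is the careful tracking of the Van Vleck--Morette factor and of the bundle parallel transport inside $u_0$, since it is exactly the subleading, antisymmetric piece of $u_0$ that carries $F_A$; getting the numerical constant right, so that the skew component is $F_A$ and not a nonzero multiple of it, is where the conventions for $d_A$, for the adjoint $\nabla_A^{*}$, and for the sign of the curvature must all be pinned down consistently.
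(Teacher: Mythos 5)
Your argument is correct, but it is not the paper's argument: it is exactly the alternative route the paper mentions and deliberately declines to take (``This equality can be proved by heat equation asymptotics, but also arises naturally from consideration of canonical maps and balanced connections''). You identify the spectral sum with the diagonal kernel of $(4\pi t)^{n/2}\,\nabla_A e^{-tL}\nabla_A^{*}$ and extract its skew part from the Minakshisundaram--Pleijel parametrix, so that the curvature ultimately enters through the coincidence limit $\nabla^x_{[\mu}\nabla^y_{\nu]}u_0(x,y)\big|_{x=y}=\tfrac12 F_{\mu\nu}$, i.e.\ through the infinitesimal holonomy of the parallel-transport factor, with Duhamel estimates controlling the parametrix tail. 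The paper instead runs the sum through its canonical-map formalism: the weighted spectral projection $\Pi_{N,t}=\sum_{\lambda_j\le N}e^{-t\lambda_j}\psi_j\langle\cdot,\psi_j\rangle$ defines a map $V_{N,t}$ into a Grassmannian, and the pullback $d_{A,N,t}$ of the canonical connection differs from $d_A$ by $\sum_j e^{-t\lambda_j}d_A\psi_j\langle\cdot,\Pi_{N,t}^{-1}\psi_j\rangle$, which is $O(t)$ in $C^1$ by heat asymptotics (equation (\ref{lowerorder})); since curvature is $C^1$-continuous in the connection, $F_A=F_{A,N,t}+O(t)$, and $F_{A,N,t}$ is given \emph{algebraically} by the Grassmannian curvature formula (\ref{canF}), which becomes the stated spectral sum after discarding the $Q$-correction (of order $t^2$, again by (\ref{lowerorder})) and substituting $\Pi_{N,t}^{-1}(x,x)=(4\pi t)^{n/2}I+O(t^{1+n/2})$. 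The trade-off: your computation is self-contained and in principle yields the full short-time expansion, but it requires the two-variable coincidence-limit bookkeeping (Van Vleck factor, holonomy constant, sign conventions) that you rightly flag as the delicate point; the paper's proof needs only the diagonal asymptotics of $k_t$ and the $O(t)$ bound on its first derivative along the diagonal---the constant in front of $F_A$ is never computed, because $F_A$ appears on both sides through naturality of the pullback connection. Both proofs are valid; yours simply bypasses the thematic point of the paper's section, namely that the identity falls out of the curvature formulas for connections induced by maps to Grassmannians.
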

This equality can be proved by heat equation asymptotics, but also arises naturally from consideration of canonical maps and balanced connections.

\section{Canonical Maps}\label{canmap}
Let $M$ be a compact Riemannian manifold. Let $(E,h,d_A)$ be a rank $k$ Hermitian vector bundle over $M$ with metric $h$ and metric  compatible connection $d_A$.  Let $H$ be an $N-$ dimensional subspace of smooth sections of $E$.  Let $\{s_a\}_{a=1}^N$ be an $L_2$-unitary basis of $H$. Let $\Pi_{H,h}$ denote the Schwartz kernel of the $L_2$ projection onto $H$. We write $\Pi_H$ or $\Pi$ when $h$ or $(H,h)$ are understood.  
\begin{definition}
We call a subspace  $H\subset C^\infty(M,E)$ {\em admissible} if $\Pi_{H}(x,x)$ is invertible for all $x\in M$.
\end{definition} 
$H$ is admissible if and only if $E$ is globally generated by sections of $H$.  Thus $\{s_a(x)\}_{a=1}^N$ spans $E_x$ for all $x\in M$.    Let $\{e_a\}_{a=1}^N$ be a unitary basis for $\IC^N$. Let $\{e_a^*\}_a$ denote the dual basis.  Define a rank $k$ projection operator on $\IC^N$ as follows. 
$$V:= V_{ab}e_b\otimes e_a^*,$$
where 
\begin{equation}\label{vdef}V_{ab}(x) = \langle s_a(x), \Pi^{-1}(x,x)s_b(x)\rangle .\end{equation}
Fix $x$ and choose an orthonormal basis $\{\tilde s_a\}_a$ of $H$ so that $\langle \tilde s_a(x),\tilde s_b(x)\rangle(x) = \lambda_{a}\delta_{ab},$ with $\lambda_a = 0,$ for $a>k$. For this frame, defining $\tilde V$ analogous to $V$, we have 
$$ \tilde V(x) = \sum_{a\leq k}  e_a\otimes   e_a^*.$$
As the two matrices are conjugate under an orthonormal change of frame, we see 
$V$ is a rank $k$ orthogonal projection, and $V$ defines a map 
$$i_{(h,H)}:M\to Gr(k,N).$$ 
We suppress the basis dependence of this map as a change of unitary  basis corresponds to composition with an isometry of $Gr(k,N)$. 

In this notation, the universal bundle over $Gr(k,N)$ is the subbundle of 
$Gr(k,N)\times \IC^N$ of the form $U:= \{(V,w)\in Gr(k,N)\times \IC^N: w=Vw\}.$ This bundle is equipped with a canonical
connection 
$$d_C  s := Vd s,$$
where $d$ is the trivial connection on the ambient trivial bundle.  The canonical connection is compatible with the metric. The universal bundle also has a canonical subspace of sections $H_C$ spanned by 
$$z_a(V) := Ve_a.$$
We may naturally identify $i_{(h,H)}^*z_a$ with $s_a$. The standard metric $\langle\cdot,\cdot\rangle$ on the trivial bundle induces a metric on the canonical bundle. We compute 
$$\langle z_a(x), z_b(x)\rangle(x) = \langle V_{ap}(x)e_p , V_{bq}(x)e_q\rangle =   V_{ab}(x).$$

\begin{definition}We say $h$ is {\em balanced, relative to $H$} if 
$$h = i_{(h,H)}^*\langle\cdot,\cdot\rangle.$$

\end{definition}

Applied to the canonical sections, the connection gives 
$$d_C z_a(V) = VdVe_a.$$
From $V^2 = V,$ we have $V(dV)V = 0$. 
Hence, for each $x\in Gr(k,N)$ and every constant section $e_x$ of the trivial $\IC^N$  bundle satisfying $e_x=V(x)e_x,$ the canonical connection satisfies 
\begin{equation}\label{charC}d_C s(x) = 0, \text{ for } s(y) := V(y)e_x.\end{equation}
Equation (\ref{charC}) characterizes the canonical connection. 

\begin{definition}We say $d_A$ is {\em balanced with respect to $(h,H)$} if $d_As(x) = 0$ when $\langle s,\psi\rangle_{L_2} =0,$ $\forall\psi\in H$ such that $\psi(x) = 0.$
Equivalently, $$d_A = i_{(h,H)}^*d_C.$$
Similarly, if the underlying manifold is complex, we say a semiconnection $\db_A$ is {\em balanced with respect to $(h,H)$} if $\db_As(x) = 0$ when $\langle s,\psi\rangle_{L_2} =0,$ $\forall\psi\in H$ such that $\psi(x) = 0.$
\end{definition}
\section{$H$ defined by second order equations}\label{2ndorder}
In this section, we compute the equations satisfied by $V$, when $H$ is a subspace of the solution space of a first or second order elliptic equation, as happens in the special case when $H$ is generated by holomorphic sections.  
Let $Q$ denote the complementary projection: 
\begin{equation}\label{qdef}Q=I-V.\end{equation}
Then in local coordinates, 
\begin{equation}\label{dV}V_{ab,j}(x) = Q_{ac}\langle s_{c;j} , \Pi^{-1} s_b \rangle 
                                              + \langle s_a , \Pi^{-1} s_{c;j}\rangle Q_{cb}.\end{equation}    
Differentiating again gives 

$$ V_{ab,ji}(x) = Q_{ac,i}\langle s_{c;j} , \Pi^{-1} s_b \rangle +Q_{ac}\langle s_{c;ji} , \Pi^{-1} s_b \rangle+Q_{ac}\langle s_{c;j} , \Pi^{-1} s_{b;i} \rangle Q_{pb}$$
$$ -Q_{ac}\langle s_{c;j} ,  \Pi^{-1}s_{p}  \rangle \langle s_{p;i}, \Pi^{-1}  s_b  \rangle
                                              +  \langle s_a , \Pi^{-1} s_{c;j}\rangle Q_{cb,i}
+  \langle s_a , \Pi^{-1} s_{c;ji}\rangle Q_{cb} -  \langle s_a ,\Pi^{-1}s_{p;i} \rangle \langle s_p,\Pi^{-1}s_{c;j}\rangle Q_{cb} $$
$$+  Q_{ap}\langle s_{a;i} , \Pi^{-1} s_{c;j}\rangle Q_{cb} $$  
$$= - Q_{ac}\langle s_{c;i} , \Pi^{-1} s_p \rangle \langle s_{p;j} , \Pi^{-1} s_b \rangle
-  \langle s_a , \Pi^{-1} s_{c;i}\rangle Q_{cm} \langle s_{m;j} , \Pi^{-1} s_b \rangle
 +Q_{ac}\langle s_{c;ji} , \Pi^{-1} s_b \rangle+Q_{ac}\langle s_{c;j} , \Pi^{-1} s_{b;i} \rangle Q_{pb}$$
$$ -Q_{ac}\langle s_{c;j} ,  \Pi^{-1}s_{p}  \rangle \langle s_{p;i}, \Pi^{-1}  s_b  \rangle
            -  \langle s_a , \Pi^{-1} s_{c;j}\rangle  Q_{cm}\langle s_{m;i} , \Pi^{-1} s_b\rangle                  
 -  \langle s_a , \Pi^{-1} s_{c;j}\rangle  \langle s_c, \Pi^{-1} s_{m;i}\rangle Q_{mb} $$
$$
+  \langle s_a , \Pi^{-1} s_{c;ji}\rangle Q_{cb} -  \langle s_a ,\Pi^{-1}s_{p;i} \rangle \langle s_p,\Pi^{-1}s_{c;j}\rangle Q_{cb} +  Q_{ap}\langle s_{a;i} , \Pi^{-1} s_{c;j}\rangle Q_{cb}.  $$  
The orthogonal projection of the symmetric matrices onto the tangent space at $V$ of the rank $k$ projections is $$A\to VAQ+QAV.$$ 
Hence the projection of $ V_{ab,ji}(x)e_b\otimes e_a^*$ onto the tangent space at $V$ of the rank $k$ projections is 
$$(- Q_{ac}\langle s_{c;i} , \Pi^{-1} s_p \rangle \langle s_{p;j} , \Pi^{-1} s_b \rangle
 +Q_{ac}\langle s_{c;ji} , \Pi^{-1} s_b \rangle$$
$$ -Q_{ac}\langle s_{c;j} ,  \Pi^{-1}s_{p}  \rangle \langle s_{p;i}, \Pi^{-1}  s_b  \rangle           
 -  \langle s_a , \Pi^{-1} s_{c;j}\rangle  \langle s_c, \Pi^{-1} s_{m;i}\rangle Q_{mb} $$
$$
+  \langle s_a , \Pi^{-1} s_{c;ji}\rangle Q_{cb} -  \langle s_a ,\Pi^{-1}s_{p;i} \rangle \langle s_p,\Pi^{-1}s_{c;j}\rangle Q_{cb})e_b\otimes e_a^* .  $$ 
Hence as a map into the Grassmannian, we have at the center of a normal coordinate system
$$d_V^*di_{(h,H)} = ( 2Q_{ac}\langle s_{c;j} , \Pi^{-1} s_p \rangle \langle s_{p;j} , \Pi^{-1} s_b \rangle
 -Q_{ac}\langle s_{c;jj} , \Pi^{-1} s_b \rangle$$
$$
-  \langle s_a , \Pi^{-1} s_{c;jj}\rangle Q_{cb} +  2\langle s_a ,\Pi^{-1}s_{p;j} \rangle \langle s_p,\Pi^{-1}s_{c;j}\rangle Q_{cb})e_b\otimes e_a^* .  $$ 
Here $d_V^*$ denotes the adjoint of the exterior derivative on $f^*TGr(k,N)$ valued forms, and $d_V^*di_{(h,H)}$ is the tension of $i_{(h,H)}$.  

Suppose now that the elements of $H$ satisfy a Bochner identity 
\begin{equation}\label{bochner}\nabla^*\nabla s + Fs = 0,\end{equation}
where $F$ is any algebraic operator. Then 
$$Q_{ac}\langle s_{c;jj} , \Pi^{-1} s_b \rangle = 0 = \langle s_a , \Pi^{-1} s_{c;jj}\rangle Q_{cb},$$ and  we have 
\begin{equation}\label{balanceharmonic}d^*di_{(h,H)} =   2Q_{ac}\langle s_{c;j} , \Pi^{-1} s_p \rangle \langle s_{p;j} , \Pi^{-1} s_b \rangle e_b\otimes e_a^*
  +  2\langle s_a ,\Pi^{-1}s_{p;j} \rangle \langle s_p,\Pi^{-1}s_{c;j}\rangle Q_{cb} e_b\otimes e_a^* . \end{equation}
\begin{proposition}If the subspace $H$ satisfies (\ref{bochner}) with respect to an $(h,H)$ balanced connection,
then $i_{(h,H)}$ is harmonic. 
\end{proposition}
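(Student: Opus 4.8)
The plan is to grant the tension-field formula (\ref{balanceharmonic}), which already incorporates the Bochner hypothesis (\ref{bochner}), and to show that its two remaining terms vanish once $d_A$ is balanced. It is convenient to package the pointwise data at a fixed $x\in M$ into operators on $\IC^N$. Let $S\colon \IC^N\to E_x$ be the evaluation map $Se_a=s_a(x)$, and let $D_j\colon\IC^N\to E_x$ be its covariant derivative $D_je_a=s_{a;j}(x)$ (taken with respect to $d_A$ at the center of the normal coordinate system). Then $\Pi(x,x)=SS^*$, the projection of the previous section is $V=S^*\Pi^{-1}S$, and $Q=I-V$ is the orthogonal projection onto $\Ker S$. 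Setting $M_j:=D_j^*\Pi^{-1}S$, the matrix entries appearing in (\ref{balanceharmonic}) are exactly $\langle s_{c;j},\Pi^{-1}s_p\rangle=(M_j)_{cp}$ and $\langle s_a,\Pi^{-1}s_{c;j}\rangle=(M_j^*)_{ac}$, so the right-hand side of (\ref{balanceharmonic}) becomes $2\sum_j\big(QM_jM_j+M_j^*M_j^*Q\big)_{ab}\,e_b\otimes e_a^*$. Hence it suffices to prove $M_j^2=0$ for every $j$.

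I would obtain this from two structural identities. First, admissibility alone gives $SV=SS^*\Pi^{-1}S=S$, i.e. $SQ=0$; consequently $M_jV=D_j^*\Pi^{-1}(SV)=M_j$, so $M_jQ=0$. (This is the very identity $SQ=0$, equivalently $QS^*=0$, that kills the two second-order terms $Q_{ac}\langle s_{c;jj},\Pi^{-1}s_b\rangle$ and $\langle s_a,\Pi^{-1}s_{c;jj}\rangle Q_{cb}$ in the passage to (\ref{balanceharmonic}), once $\sum_j s_{c;jj}=Fs_c$ is inserted.) Second, the balanced hypothesis translates into $D_jV=0$. Indeed, for $c\in\mathrm{range}\,V$ the section $\sigma_c:=\sum_a c_a s_a$ is $L_2$-orthogonal to every $\psi\in H$ with $\psi(x)=0$: such $\psi$ corresponds to a coefficient vector in $\Ker S=\mathrm{range}\,Q$, and $\langle\sigma_c,\psi\rangle_{L_2}$ is then the Hermitian pairing on $\IC^N$ of a vector in $\mathrm{range}\,V$ with one in $\mathrm{range}\,Q$, which vanishes. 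By the definition of a balanced connection, $d_A\sigma_c(x)=0$, i.e. $D_jc=0$ for $c\in\mathrm{range}\,V$, so $D_jV=0$ and hence $VD_j^*=0$, giving $QM_j=M_j$.

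Combining the two identities yields $M_j=QM_jV$, so that $M_j^2=QM_j(VQ)M_jV=0$ because $VQ=0$; taking adjoints gives $(M_j^*)^2=0$ as well. Substituting into the displayed form of (\ref{balanceharmonic}) makes both terms vanish, so the tension $d^*d\,i_{(h,H)}$ is zero and $i_{(h,H)}$ is harmonic. The one delicate point I would take most care over is the second identity: correctly reading the definition of a balanced connection as the operator statement $D_jV=0$, and tracking the Hermitian-conjugation conventions in $M_j$ and $M_j^*$. Everything else is bookkeeping, since the vanishing of the entire tension field is driven by the off-diagonal block structure $M_j=QM_jV$ and the nilpotency $M_j^2=0$ it forces.
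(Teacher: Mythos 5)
Your proposal is correct and follows essentially the same route as the paper: both take the tension-field formula (\ref{balanceharmonic}) as the starting point and show that the balanced hypothesis forces its two remaining terms to vanish. The paper's proof asserts this vanishing in a single line, while your operator bookkeeping ($SQ=0$ from admissibility, $D_jV=0$ from balancedness, hence $M_j=QM_jV$ and $M_j^2=0$) is exactly the detailed justification of that assertion, so it fills in the paper's omitted details rather than taking a different path.
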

\begin{proof}If the connection is balanced, then $0 = Q_{ac}\langle s_{c;j} , \Pi^{-1} s_p \rangle \langle s_{p;j} , \Pi^{-1} s_b \rangle$, and therefore $d^*di_{(h,H)} = 0.$
\end{proof}

Let $P$ denote the space of $N\times N$ semipositive matrices of rank $k$. Then $P$ is a bundle over $Gr(k,N)$ and $i_{(H,h)}$ has a lift $S_{(H,h)}$ to $P$ given by 
$$S(x) = \langle s_a(x),s_b(x)\rangle e_b\otimes e_a^*.$$
\begin{proposition}If $M$ is Kahler and the subspace $H$ satisfies 
$\db_A^*\db_A + F$, $F$ algebraic, and $\db_A$ balanced,
then $i_{(h,H)}$ is a critical point of 
\begin{equation}\label{defenerg}E_S(f):= \|S^{\frac{1}{2}}\db f\|^2,\end{equation}
where $f$ is varied without varying $S$.  
\end{proposition}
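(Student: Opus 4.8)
The plan is to follow the proof of the preceding (harmonic-map) proposition line for line, with the full tension field $d_V^*di_{(h,H)}$ replaced by the Euler--Lagrange operator of the partial energy $E_S$, and the Laplacian Bochner identity (\ref{bochner}) replaced by its $\db_A$-analogue $\db_A^*\db_A s + Fs = 0$. First I would compute the first variation of $E_S(f) = \|S^{\frac12}\db f\|^2$, varying $f$ as a map into $Gr(k,N)$ while holding $S$ fixed. For $\delta f \in \Gamma(f^*TGr(k,N))$ one has $\frac12 \delta E_S = \Re\langle \db_V^*(S\,\db f),\delta f\rangle_{L_2}$, where $\db_V$ denotes $\db$ on $f^*T^{1,0}Gr(k,N)$ coupled to the canonical connection. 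Since $\delta f$ is arbitrary, the Euler--Lagrange equation is the vanishing of the projection of $\db_V^*(S\,\db f)$ onto the tangent space of the rank $k$ projections (the projection $A\mapsto VAQ+QAV$ used above). Thus the claim reduces to showing that $f = i_{(h,H)}$ solves this equation; the relevant operator is exactly the antiholomorphic counterpart of the tension $d_V^*di_{(h,H)}$ computed in (\ref{balanceharmonic}).

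Next I would re-expand this operator in terms of the sections, repeating the derivation of (\ref{balanceharmonic}) but retaining only the $\db$-directions. Using $i_{(h,H)}^*z_a = s_a$ and $S_{ab} = \langle s_a,s_b\rangle$, the weight $S^{\frac12}$ is precisely what removes the $\Pi^{-1}$-normalizations appearing in $V_{ab,\bar\jmath}$ and replaces them by genuine $\db_A$-covariant derivatives $s_{a;\bar\jmath}$; this is the one point at which measuring the energy in the $S$-metric, rather than in the pullback of the canonical metric, is essential. As in (\ref{balanceharmonic}), the resulting tangential operator splits into a second-order trace term assembled from $\langle s_{c;\bar\jmath j},\Pi^{-1}s_b\rangle$ and first-order terms quadratic in $\langle s_{c;\bar\jmath},\Pi^{-1}s_p\rangle$.

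The two hypotheses then close the argument exactly as in the harmonic case. Balancedness of $\db_A$ gives $\langle s_{c;\bar\jmath},\Pi^{-1}s_p\rangle = (V_{;\bar\jmath}V)_{cp}$, so every quadratic term inherits a factor $QV = 0$ and drops out, just as $Q_{ac}\langle s_{c;j},\Pi^{-1}s_p\rangle\langle s_{p;j},\Pi^{-1}s_b\rangle$ vanished above. The $\db_A$-Bochner identity turns the trace term into the algebraic expression $\langle -Fs_c,\Pi^{-1}s_b\rangle$ (any curvature correction from commuting $\nabla_j$ and $\nabla_{\bar\jmath}$ on the K\"ahler manifold is itself algebraic and is absorbed here); and since $F$ is algebraic while $\sum_c Q_{ac}(x)s_c(x)=0$, the contraction $Q_{ac}\langle Fs_c,\Pi^{-1}s_b\rangle = \langle F\sum_c Q_{ac}s_c,\Pi^{-1}s_b\rangle$ vanishes pointwise. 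Hence the tangential projection of $\db_V^*(S\,\db f)$ is zero and $i_{(h,H)}$ is critical for $E_S$.

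I expect the main obstacle to be the second step: carrying out the $S$-weighted first variation carefully and checking that the $S^{\frac12}$-weight really does convert the Grassmannian $\db$-tension into the operator $\db_A^*\db_A$ acting on the $s_a$, so that the $\db_A$-analogue of (\ref{bochner}) applies term by term. Once that identification is in place, the balanced and algebraic-$F$ cancellations are formally identical to those already established for the harmonic proposition.
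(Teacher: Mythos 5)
There is a genuine gap, and it sits exactly at the point where this proposition differs from the preceding (harmonic-map) one. Your third step claims that balancedness of $\db_A$ makes \emph{every} quadratic term drop out ``just as'' in the harmonic case. That cancellation mechanism requires both first-order factors in a quadratic term to be $(0,1)$-derivatives. Write $W^{\bar j}_{cp}=\langle s_{c;\bar j},\Pi^{-1}s_p\rangle$ and $W^{j}_{cp}=\langle s_{c;j},\Pi^{-1}s_p\rangle$. One always has $W^{\bar j}Q=W^{j}Q=0$ (because $\sum_p Q_{mp}s_p(x)=0$), and balancedness of $\db_A$ gives in addition $VW^{\bar j}=0$, hence $W^{\bar j}=QW^{\bar j}V$, which is your identity. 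But balancedness of $\db_A$ says nothing about $VW^{j}$; killing that would require the full connection (i.e.\ also $\p_A$) to be balanced, which is precisely the hypothesis of the preceding proposition and is not assumed here. Consequently the \emph{mixed} quadratic term $Q_{ac}\langle s_{c;\bar j},\Pi^{-1}s_p\rangle\langle s_{p;j},\Pi^{-1}s_b\rangle$ --- which is exactly what the paper computes $\db^*\db i_{(h,H)}$ to equal under these hypotheses --- does not vanish: $QW^{\bar j}W^{j}=QW^{\bar j}(VW^{j})$, and $VW^{j}\neq 0$ in general. If your argument were correct it would prove the stronger statement that $i_{(h,H)}$ is a critical point of the \emph{unweighted} energy $\|\db f\|^2$, and the weight $S$ would be superfluous.

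This also shows that your second step misidentifies the role of $S$: it is not there to ``remove the $\Pi^{-1}$-normalizations'' (note that $SV=VS=S$, hence $SQ=QS=0$, so $S$ annihilates the block containing the coefficient matrix $QW^{\bar j}$ of $\db i_{(h,H)}$ from (\ref{defdb}); the weight can only enter through the metric pairing, not by renormalizing derivatives). The paper's proof instead proceeds as follows: after the Bochner identity kills the second-order terms (your fourth step, which is fine) and balancedness reduces the rest to the single mixed term above, it computes the connection compatible with the weighted metric $\langle T_1,T_2\rangle_S=\langle T_1,ST_2\rangle$, namely $\nabla^S_j=\nabla_j+\langle s_{b;j},s_a\rangle e_a\otimes e_b^*$. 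The Euler--Lagrange operator of $E_S$ is the divergence of $\db f$ taken with respect to this connection --- equivalently, when you integrate the first variation by parts the derivative must also fall on $S$ --- and the correction term $\langle s_{b;j},s_a\rangle e_a\otimes e_b^*$, built from $(1,0)$-derivatives, is exactly what cancels the surviving mixed term $QW^{\bar j}W^{j}$. That cancellation between the $\nabla S$ contribution and the mixed quadratic term is the actual content of the proposition; your proposal never produces the $\nabla S$ term, so the argument cannot close as written.
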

\begin{proof} The complex structure operator on $T_VGr(k,N)$ is given by 
\begin{equation}\label{defJ}JX = i(V-Q)X.\end{equation}
 By $\db i_{(h,H)}(x)$ we denote the component of $di_{(h,H)}(x)$ mapping $T_x^{0,1}M\to T^{1,0}_{i_{(h,H)(x)}}Gr(k,N)$. Hence 
\begin{equation}\label{defdb}\db i_{(h,H)} = Q_{ac}\langle s_{c;\bar j} , \Pi^{-1} s_b \rangle e_b\otimes e_a^*\otimes d\bar z^j.\end{equation}
                                             
 Since  $\db_A$ is balanced, we have 
$$\db^*\db i_{(h,H)} =    Q_{ac}\langle s_{c;\bar j} ,  \Pi^{-1}s_{p}  \rangle \langle s_{p;j}, \Pi^{-1}  s_b  \rangle  e_b\otimes e_a^* .  $$

Suppose now that we alter the metric on the tangent space in an $S-$ dependent manner as follows. 
$$\langle T_1,T_2\rangle_S:= \langle T_1,ST_2\rangle .$$
Then 
$$\frac{\p}{\p x^j}\langle T_1,T_2\rangle_S =\langle \nabla^S_jT_1,T_2\rangle_S + \langle T_1,\nabla^S_{\bar j}T_2\rangle
=  \langle \nabla_jT_1,ST_2\rangle  + \langle T_1,\nabla_{\bar j}(\langle s_a,s_b\rangle e_b\otimes e_a^*T_2)\rangle
$$
$$=  \langle \nabla_jT_1,ST_2\rangle  + \langle T_1, \langle s_a,s_{b,j}\rangle e_b\otimes e_a^*T_2 +S\nabla_{\bar j}T_2 \rangle$$
$$=  \langle (\nabla_j+ \langle  s_{b,j},s_a\rangle e_a\otimes e_b^*)T_1,ST_2\rangle  + \langle T_1, S\nabla_{\bar j}T_2 \rangle.$$
Thus, $\nabla_j^S = \nabla_j+ \langle  s_{b,j},s_a\rangle e_a\otimes e_b^* $ is metric compatible. In particular, 
$i_{(h,H)}$ is formally a critical point of  the energy functional 
$$E_S(f):= \|S^{\frac{1}{2}}\db f\|^2,$$
where $S$ is fixed. 
\end{proof}

\section{The Discrete Dynamics of Balanced Semi-Connections}\label{balance}
It is easy to analyze the pullback via canonical maps of canonical connections, semiconnections, and metrics. 
First we consider the case of semiconnections, as we can alter the semiconnection without varying the metric. 

Given $(h,H,\db_A)$, define a new semiconnection 
\begin{equation}\label{semidef}\db_{T^{0,1}_{(h,H)}A} := \db_A - \db_As_a\langle\cdot,\Pi^{-1}(x,x)s_a\rangle.\end{equation}
From this explicit definition, we see that this semiconnection is actually independent of $A$: 
\begin{equation}\label{semind}\db_{T^{0,1}_{(h,H)}A} =  \db_{T^{0,1}_{(h,H)}B},\forall B.\end{equation}
Its curvature satisfies 
$$F_{T^{0,1}_{(h,H)}A}^{0,2} = F_{A}^{0,2} - F_A^{0,2}s_a\langle\cdot,\Pi^{-1}s_a\rangle  
+  \db_As_a\langle\cdot,\Pi^{-1}\p_A s_a\rangle   
-  \db_As_a\langle \db_A s_b,\Pi^{-1} s_a\rangle \langle\cdot,\Pi^{-1}s_b\rangle  $$
$$
-  \db_As_a\langle s_b,\Pi^{-1}s_a\rangle   \langle\cdot,\Pi^{-1} \p_A s_b\rangle 
+   \db_As_a\langle \db_As_b,\Pi^{-1} s_a\rangle  \langle\cdot,\Pi^{-1} s_b\rangle, $$
which reduces to 
\begin{equation}\label{f02can}F_{T^{0,1}_{(h,H)}A}^{0,2}=  \db_As_a\langle\cdot,\Pi^{-1}\p_A s_b\rangle Q_{ba} . \end{equation}

More generally, we may define 
\begin{equation}\label{fulldef}d_{T_{(h,H)}A} := d_A - d_As_a\langle\cdot,\Pi^{-1} s_a\rangle.\end{equation}
Observe that on complex manifolds 
\begin{equation}\label{semisame}\db_{T_{(h,H)}A}  = \db_{T^{0,1}_{(h,H)}A}.\end{equation}
Then we have 
\begin{equation}\label{canF}F_{T_{(h,H)}A} =   d_As_a\langle\cdot,\Pi^{-1}d_A s_b\rangle Q_{ba} . \end{equation}
This connection is no longer metric compatible; so, we must change $h$ also. We define 
$$T_{(h,H)}h(\cdot,\cdot) := c_Hh(\cdot,\Pi^{-1}\cdot ) = c_H\langle\cdot,\Pi^{-1}\cdot\rangle,$$
where 
$$c_H:= \frac{\text{dim }H}{rank(E)Vol(M)}.$$
The new connection is compatible with respect to this new metric. 

Next we consider modifications of $H$. Once again we consider the complex case and the general case. For complex manifolds we consider the energy functional 
$$\Phi^{0,1}(h,H,\db_A): = \sum_a\|\db_As_a\|^2.$$ For Riemannian manifolds we consider the functional 
$$\Phi (h,H,d_A): = \sum_a\|d_As_a\|^2.$$
Let $e^{0,1}$ and $e$ denote the corresponding local energy densities: 
$$e^{0,1}(h,H,\db_A)(x) := \sum_a|\db_A s_a|^2(x), $$ and 
$$e(h,H,d_A)(x) := \sum_a|d_A s_a|^2(x).$$
Set $e^{1,0} = e-e^{0,1}.$  
At each $x\in M$, we may decompose $H = G_x\oplus P_x$, where $G_x$ is the kernel of the evaluation at $x$ map, and $P_x$ is the $L_2$ orthogonal complement of $G_x$. Decompose the local energy density into 
$$e^{0,1}(h,H,\db_A)(x) = e^{0,1,G}(h,H,\db_A)(x) + e^{0,1,P}(h,H,\db_A),$$
where 
$$e^{0,1,G}(h,H,\db_A)(x):= \sum_i|\db_A u_i|^2(x) = \sum_{c,m}Q_{cm}\langle s_{c;\bar j},s_{m;\bar j}\rangle (x), $$ $e^{0,1,P}(h,H,\db_A)(x):= \sum_{j=1}^{k}|\db_A v_j|^2(x), $ with $\{u_i\}_i$ and $\{v_j\}_j$ $L_2$ unitary bases of $G_x$ and $P_x$ respectively. Let 
$$\Phi^{0,1}_G(h,H,\db_A) := \int e^{0,1,G}(h,H,\db_A)dv  = E_S(i_{(h,H)}),$$
with $E_S$ defined in (\ref{defenerg}). Observe that $\Phi^{0,1}_G(h,H,\db_A)$ is independent of the choice of semiconnection. Similarly define 
$ \Phi^{1,0}_G(h,H,\p_A)$ and $\Phi_G(h,H,d_A)$.  Then we have    
$$ e^{0,1,G}(h,H,\db_A)(x) =  e^{0,1}(h,H,\db_{T^{0,1}_{(h,H)}A})(x).$$
So, $\Phi^{0,1}(h,H,\db_A)$ is nonincreasing when we send $\db_A \to \db_{T^{0,1}_{(h,H)}A}$. It is strictly decreasing unless the semiconnection is balanced. 
Next we replace $H$ with a {\em choice} of $T_{(h,\db_B)}H$ determined as follows.  
Let $ T_{(h,\db_B)}H$ be a subspace $W$ of $C^\infty(M,E)$ on which $\Phi^{0,1}(h,W,\db_B)$ is minimal. Generically such minimizing $W$ are unique. The existence of $W$ is a linear eigenvalue problem, elementary on compact manifolds. 

Thus we have a discrete dynamical system minimizing $\Phi^{0,1}$. 
Consider the map 
\begin{equation}\label{gen01}T^{0,1}:(h,H,\db_A)\to (h,T_{(h,\db_{T^{0,1}_{(h,H)}A})}H,\db_{T^{0,1}_{(h,H)}A}).
\end{equation}
Then 
$$\Phi^{0,1}(T^{0,1}(h,H,\db_A))\leq \Phi^{0,1}(h,H,\db_A),$$
with equality only if $\db_A$ is $(h,H)$ balanced, $H$ minimizes $\Phi^{0,1}(h,\cdot,\db_A)$ and is therefore spanned by eigensections of $\db_A^*\db_A$. Observe the discrete dynamical system generated by $T^{0,1}$ terminates if $\Pi =:\Pi_H$ is not invertible. If we replace $E$ by $E\otimes L^p$ for $p$ sufficiently large depending on $(h,\db_A)$, then Bergman kernel asymptotics for non holomorphic bundles (see \cite{CS} and\cite{MM}) can be used to show there is a unique $H$ minimizing $\Phi^{0,1}(h,\cdot,\db_A)$, and this $H$ is admissible. 

We may define a similar dynamical system in the Riemannian case, but now we must modify the metric at each step also to keep the connection metric compatible. Altering the metric disturbs the monotonicty of the energy under the natural transformation 
\begin{equation}\label{gen}T :(h,H,d_A)\to (h,T_{(h,d_{T_{(h,H)}A})}H,d_{T_{(h,H)}A}),\end{equation}
where $ T_{(h,d_B)}H$ is a subspace $W$ of $C^\infty(M,E)$ on which $\Phi(h,W,d_B)$ is minimal.

\section{Obstructions}\label{obstructions}

\begin{definition}Fix a metric $h$. 
We call a sequence of admissible subspaces $\{H_n\}_n$ of $C^\infty(M,E)$  {\em stable} if there exists $C>0$ such that 
$|\Pi_{H_n}^{-1}|\leq C,\,\,\forall n.$ We call a sequence stable on the submanifold $Z$ if there exists $C>0$ such that 
 $|\Pi_{H_n}^{-1}(x,x)|\leq C,\,\,\forall n, \forall x\in Z.$
\end{definition}
We have the following proposition. 
\begin{theorem}\label{ch1e} 
Suppose that $ch_1(E)\not\in H^{1,1}(M)$. Let $\{H_n\}_n $ be a stable sequence of subspaces and $\{A_n\}_n $
 be a sequence of smooth hermitian connections on $E$. If $\int_Me^{0,1}(h,H_n,\db_{A_n})dv\rightarrow  0,$ then 
$\int_Me (h,H_n,d_{B_n})dv \rightarrow  \infty,$ for {\em any} sequence of hermitian connections $\{B_n\}_n$.\\
Let $[Z]\in H_2(M,\IZ)$ have nonzero pairing with the $(0,2)\text{ component of }c_1(E).$
Let $\{H_n\}_n $ be a sequence of admissible subspaces stable on $Z$ and $\{A_n\}_n $
 a sequence of smooth connections. If $\int_Ze^{0,1}(h,H_n,\db_{A_n})dv_Z\rightarrow  0,$ then 
$\int_Ze (h,H_n,d_{B_n})dv_Z \rightarrow  \infty,$ for {\em any} sequence of hermitian connections $\{B_n\}_n$. 
\end{theorem}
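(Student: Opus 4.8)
The plan is to argue by contradiction. Suppose $\int_Z e(h,H_n,d_{B_n})\,dv_Z$ does not tend to $\infty$; after passing to a subsequence, assume it is bounded by a constant $C$. First I would replace each $B_n$ by its balanced connection $\hat B_n:=d_{T_{(h,H_n)}B_n}$: because $\tfrac{i}{2\pi}\Tr F$ represents $c_1(E)$ for the curvature of \emph{any} connection on $E$, this substitution is free, and by (\ref{canF})--(\ref{f02can}) it makes the $(0,2)$ curvature explicit, $F^{0,2}_{\hat B_n}=\db_{B_n}s_a\langle\cdot,\Pi^{-1}\p_{B_n}s_b\rangle Q_{ba}$. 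The strategy is then to show that the energy hypotheses force $\Tr F^{0,2}_{\hat B_n}$ to be small on $Z$, and to contradict this with the assumption that the $(0,2)$ part of $c_1(E)$ pairs nontrivially with $[Z]$.

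The analytic core is a pointwise bound on $Z$. Taking the fibre trace of (\ref{f02can}) and contracting the $a$--index against the projection $Q$, Cauchy--Schwarz in the $(a,b)$ indices gives
\[
|\Tr F^{0,2}_{\hat B_n}|(x)\le|\Pi_{H_n}^{-1}(x,x)|\,\bigl(\textstyle\sum_{a,a'}Q_{aa'}\langle\db_{B_n}s_a,\db_{B_n}s_{a'}\rangle\bigr)^{1/2}\bigl(\textstyle\sum_b|\p_{B_n}s_b|^2\bigr)^{1/2}(x).
\]
The first factor under the roots is exactly the density $e^{0,1,G}$ and the second is $e^{1,0}$. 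Now I would feed in the three hypotheses: stability on $Z$ bounds $|\Pi_{H_n}^{-1}(x,x)|$ on $Z$; connection--independence of $e^{0,1,G}$ and $e^{0,1,G}\le e^{0,1}$ give $\int_Z e^{0,1,G}(h,H_n,\db_{B_n})\,dv_Z=\int_Z e^{0,1,G}(h,H_n,\db_{A_n})\,dv_Z\le\int_Z e^{0,1}(h,H_n,\db_{A_n})\,dv_Z\to0$; and $e^{1,0}\le e$ bounds $\int_Z e^{1,0}\,dv_Z$ by $C$. A Cauchy--Schwarz over $Z$ then yields $\int_Z|\Tr F^{0,2}_{\hat B_n}|\,dv_Z\to0$, and by conjugation $\int_Z|\Tr F^{2,0}_{\hat B_n}|\,dv_Z\to0$.

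The remaining step --- contradicting the nontrivial pairing --- is where I expect the real difficulty. One cannot simply conclude that $\int_Z\Tr F^{0,2}_{\hat B_n}\to0$ kills the pairing, because $\int_Z(\Tr F)^{0,2}$ is not a cohomological invariant: changing the connection alters it by $\int_Z\db\theta^{0,1}=-\int_Z\p\theta^{0,1}$, a nonzero $(1,1)$ period. The pairing $\langle\mathbf c^{0,2},[Z]\rangle=\int_Z\gamma^{0,2}$ is computed against the \emph{harmonic} $(0,2)$ representative $\gamma^{0,2}$, which exists with pure type because $M$ is K\"ahler, and this differs from the curvature's $(0,2)$ part by a $\db$--exact term with an uncontrolled $Z$--period. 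I would instead pass, as in the global case, to a connection--independent Hodge pairing: since $(\Tr F_{\hat B_n})^{0,2}$ is $\db$--closed and harmonic projection preserves bidegree on a K\"ahler manifold, one has $\int_M(\Tr F_{\hat B_n})^{0,2}\wedge\psi=\tfrac{2\pi}{i}\langle\mathbf c^{0,2},[Z]\rangle$ for the \emph{fixed} harmonic $(n,n-2)$ form $\psi$ equal to the $(n,n-2)$ part of the harmonic Poincar\'e dual of $Z$, the error $\int_M\db\beta_n\wedge\psi=\pm\int_M\beta_n\wedge\db\psi$ vanishing because $\db\psi=0$. This furnishes a fixed nonzero lower bound --- but against a globally supported test form.

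The crux is thus to reconcile this necessarily global, connection--independent lower bound with the purely $Z$--localized upper bound obtained above. The step I would try is to trade the harmonic $\psi$ for the $(n,n-2)$ component of a closed representative $\eta$ of the Poincar\'e dual of $Z$ supported in a thin tubular neighborhood $U$ of $Z$, so that $\int_M(\Tr F_{\hat B_n})^{0,2}\wedge\eta^{n,n-2}$ is controlled by the curvature on $U$ alone. The price is that $\eta^{n,n-2}$ is no longer $\db$--closed, so the defect $\int_M\db\beta_n\wedge\eta^{n,n-2}=\mp\int_M\beta_n\wedge\p\eta^{n-1,n-1}$ reappears; I would control it by integrating by parts to express it through the full curvature $F_{\hat B_n}$ on $U$, whose integral is dominated by the assumed bounded full energy (upgrading the hypotheses from $Z$ to $U$ as needed). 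Making this defect estimate quantitative is the main obstacle; once it is in hand, the fixed lower bound collides with $\int_U|\Tr F^{0,2}_{\hat B_n}|\to0$ and forces $\int_Z e(h,H_n,d_{B_n})\,dv_Z\to\infty$. The rest is the routine estimate of the second paragraph together with standard Hodge and Chern--Weil bookkeeping.
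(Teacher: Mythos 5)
For the first (global) statement, your argument is correct and is essentially the paper's own: balance the connection, use the explicit curvature formula (\ref{f02can})/(\ref{canF}), bound $|\Tr F^{0,2}|$ pointwise by $|\Pi^{-1}|\sqrt{e^{0,1,G}e^{1,0}}$, and contradict the connection-independent Hodge pairing with the harmonic representative, whose pure-type $(0,2)$ component is itself harmonic because $M$ is K\"ahler. Two of your bookkeeping choices are in fact small improvements. First, you balance $B_n$ directly and transfer the hypothesis on $A_n$ through the \emph{pointwise} connection-independence of $e^{0,1,G}$; the paper instead compares $B_n$ with the balanced connection built from $A_n$ and reweights the metric by $\Pi^{-1}$ (inequality (\ref{ineq12})) --- same mechanism, yours is cleaner. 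Second, your $L_1$--$L_\infty$ pairing repairs a literal slip in (\ref{lwrbnd}): as written there, $\|\tr F^{0,2}\|_{L_2}^2\leq C\,\Phi^{0,1}\Phi$ would require $\int e^{0,1}e\,dv\leq C\bigl(\int e^{0,1}dv\bigr)\bigl(\int e\,dv\bigr)$, which fails for concentrating densities; the correct route is exactly your Cauchy--Schwarz bound $\|\tr F^{0,2}\|_{L_1}\leq C(\Phi^{0,1})^{1/2}\Phi^{1/2}$ paired against the bounded harmonic form.

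For the second ($Z$-localized) statement, however, your proposal is not a proof, and you say so yourself: the tubular-neighborhood scheme ends with an unquantified ``defect estimate,'' and, as described, it needs energy and stability control on a neighborhood $U$ of $Z$, which is strictly stronger than the theorem's hypotheses (control on $Z$ alone). So the second half of the theorem remains unproven in your write-up. To be precise about where it breaks: the quantity your second paragraph makes small, $\int_Z|\Tr F^{0,2}_{\hat B_n}|\,dv_Z$, is not bounded below by the topological pairing, because $\int_Z(\tr F)^{0,2}$ changes under a change of connection by $\int_Z\db\theta_n^{0,1}=-\int_Z\p\theta_n^{0,1}$, a quantity the hypotheses do not touch; while the invariant pairing $\langle c_1(E)^{0,2},[Z]\rangle=\int_M\gamma^{0,2}\wedge\eta_Z^{n,n-2}$ is inherently global and is not controlled by curvature smallness on $Z$ alone. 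Your paragraph three sets these two facts side by side but never closes the gap between them.

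You should know that the paper's own treatment of this part is the single sentence ``The proof of the second result follows from localizing the preceding argument to $Z$,'' which does not address the mismatch you isolate: the global argument's key step --- orthogonality of exact forms to harmonic forms --- has no analogue for the restriction of forms to a real $2$-cycle, since the Hodge decomposition of $H^2(M)$ does not interact with pointwise type decomposition along $Z$. So your analysis shows the localization is not routine; but identifying an obstacle, even a genuine one, is not the same as overcoming it, and a complete proof of the statement as written would need either the missing defect estimate (with hypotheses only on $Z$) or a different mechanism for producing a connection-independent lower bound localized to $Z$.
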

\begin{proof}
We observe that 
From (\ref{f02can}) we have $$F_{T^{0,1}_{(h,H_n)}A_n}^{0,2} = \db_{A_n}s_a\langle\cdot,\Pi_{H_n}^{-1}\p_{A_n} s_b\rangle Q_{ba}.$$
Hence we see that 
\begin{equation}\label{ineq10}|F_{T^{0,1}_{(h,H_n)}A_n}^{0,2}(x)|^2\leq  C e^{0,1} (h,H_n,\db_{A_n})(x)e (h,H_n,d_{A_n})(x) .\end{equation}
Let $h$ be a harmonic representative of $ch_1(E)$. Then 
\begin{equation}\label{lwrbnd}0\not = \|h^{0,2}\|^2\leq \|\tr F_{T^{0,1}_{(h,H_n)}A_n}^{0,2}\|^2_{L_2}\leq  C \Phi^{0,1} (h,H_n,\db_{A_n}) \Phi (h,H_n,d_{A_n}).\end{equation}
Hence $\Phi^{0,1} (h,H_n,\db_{A_n})\to 0$ implies $\Phi (h,H_n,d_{A_n})\to\infty$. 
Now let $B_n$ be any connection, and $\{s_a\}_a$ a unitary $L_2$ basis for $H_n$. Then 
$$\sum_a |\p_{B_n}s_a |^2(x)\geq \sum_a |\p_{T_{(h,H_n)}A_n}s_a |^2(x)\geq c_n|\Pi^{-1/2}_{H_n}\p_{T_{(h,H_n)}A_n}s_a |^2(x),$$
and 
$$\sum_a |\db_{A_n}s_a |^2(x)\geq \sum_a |\db_{T^{0,1}_{(h,H_n)}A_n}s_a |^2(x)\geq c_n\sum_a |\Pi^{-1/2}_{H_n}\db_{T^{0,1}_{(h,H_n)}A_n}s_a |^2(x),$$ 
where $\{\ln(c_n)\}_n$ is a bounded sequence, by the stability hypothesis. Setting $h_n(\cdot,\cdot) = h(\cdot,\Pi_{H_n}^{-1}\cdot\rangle$ and applying (\ref{ineq10}) to the triple $(h_n,H_n,T_{(h,H_n)}A_n)$ gives 
\begin{multline}\label{ineq12}|F_{T^{0,1}_{(h_n,H_n)}T_{(h,H_n)}A_n}^{0,2}(x)|^2\leq  C e^{0,1} (h_n,H_n,\db_{T_{(h,H_n)}A_n})(x)e (h_n,H_n,T_{(h,H_n)}d_{A_n})(x)\\\leq Cc_n^{-2} e^{0,1} (h,H_n,\db_{A_n})(x)e (h,H_n, d_{B_n})(x) .\end{multline}
We may now use this inequality to replace   $\Phi (h,H_n,d_{A_n})$ by $\Phi (h,H_n,d_{B_n})$ in (\ref{lwrbnd}) to prove $\Phi (h,H_n,d_{B_n})\to\infty$ as claimed. 
The proof of the second result follows from  localizing the preceding argument to $Z$. 
\end{proof}
Let $F^{\cdot}$ denote the Hodge filtration. Taking higher exterior powers of $F^{0,2}$ and applying the preceding computation yields the following theorem. 
\begin{theorem} \label{chpe}
Suppose that $ch_p(E)\not\in F^1H^{2p}(M)\cap \bar F^1H^{2p}$. Let $\{H_n\}_n $ be a stable sequence of subspaces and $\{A_n\}_n $
 be a sequence of smooth hermitian connections. Let $\frac{1}{q}+\frac{1}{q'} =1,$ $q\geq 1$. If $e^{0,1}(A_n,H_n)\stackrel{L_{pq}}{\rightarrow} 0,$ then 
$e^{1,0}(B_n,H_n)\stackrel{L_{pq'}}{\rightarrow} \infty,$ for any sequence of hermitian connections $\{B_n\}_n$.\\
Let $[Z]\in H_{2p}(M,\IZ)$ have nonzero pairing with the $(0,2p)\text{ component of }ch_p(E).$
Let $\{H_n\}_n $ be a sequence of admissible subspaces stable on $Z$ and $\{A_n\}_n $
 a sequence of smooth connections. If $e^{0,1}(A_n,H_n)\stackrel{L_{pq}(Z)}{\rightarrow} 0,$ then 
$e^{1,0}(B_n,H_n)\stackrel{L_{pq'}(Z)}{\rightarrow} \infty,$ for any sequence of hermitian connections $\{B_n\}_n$, 
\end{theorem}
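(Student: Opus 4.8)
The plan is to lift the $p=1$ argument of Theorem \ref{ch1e} to general $p$ by raising the $(0,2)$-curvature of the balanced semiconnection to its $p$-th exterior power. Recall from (\ref{f02can}) that
$$F^{0,2}_{T^{0,1}_{(h,H_n)}A_n} = \db_{A_n}s_a\langle\cdot,\Pi_{H_n}^{-1}\p_{A_n}s_b\rangle Q_{ba}.$$
The characteristic form $\tr F^p$ is $d$-closed and represents a fixed nonzero multiple of $ch_p(E)$; extracting pure type, its $(0,2p)$ component is exactly $\tr((F^{0,2})^p)$, and the $(0,2p+1)$ part of $d\tr F^p=0$ shows $\db\tr((F^{0,2})^p)=0$. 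A short Hodge-theoretic computation (using the Kähler identities) identifies the Dolbeault class of this form with the $(0,2p)$ Hodge component of $ch_p(E)$. By the reality of $ch_p(E)$, the hypothesis $ch_p(E)\not\in F^1H^{2p}\cap\bar F^1H^{2p}$ is precisely the statement that this corner component is nonzero, so its harmonic representative $h^{0,2p}$ has $\|h^{0,2p}\|_{L_2}>0$; by the $L_2$-minimizing property of harmonic forms within a Dolbeault class,
$$0<\|h^{0,2p}\|_{L_2}\leq C\,\|\tr((F^{0,2}_{T^{0,1}_{(h,H_n)}A_n})^p)\|_{L_2}.$$

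Next I would convert this into the $L_{pq}/L_{pq'}$ dichotomy. Because (\ref{f02can}) pairs $\db_{A_n}s_a$, whose squared norms sum to $e^{0,1}$, against $\p_{A_n}s_b$, whose squared norms sum to $e^{1,0}$, the sharper pointwise estimate $|F^{0,2}_{T^{0,1}_{(h,H_n)}A_n}(x)|^2\leq C\,e^{0,1}(A_n,H_n)(x)\,e^{1,0}(A_n,H_n)(x)$ holds; this is what lets the conclusion be phrased with $e^{1,0}$ rather than the full energy $e$. Since $\tr((F^{0,2})^p)$ is a contraction of $p$ copies of $F^{0,2}$, we obtain $|\tr((F^{0,2})^p)(x)|\leq C|F^{0,2}(x)|^p\leq C(e^{0,1}(A_n,H_n))^{p/2}(e^{1,0}(A_n,H_n))^{p/2}(x)$. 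Squaring, integrating, and applying Hölder with the conjugate pair $q,q'$ gives
$$0<c\leq C\int_M (e^{0,1}(A_n,H_n))^p(e^{1,0}(A_n,H_n))^p\,dv\leq C\,\|e^{0,1}(A_n,H_n)\|_{L_{pq}}^p\,\|e^{1,0}(A_n,H_n)\|_{L_{pq'}}^p,$$
so $e^{0,1}(A_n,H_n)\stackrel{L_{pq}}{\rightarrow}0$ forces $e^{1,0}(A_n,H_n)\stackrel{L_{pq'}}{\rightarrow}\infty$.

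To replace $A_n$ by an arbitrary sequence $B_n$, I would reuse the stability step of Theorem \ref{ch1e} verbatim: the balanced connection minimizes the $(1,0)$ energy density, so $e^{1,0}(h,H_n,d_{B_n})\geq e^{1,0}(h,H_n,d_{T_{(h,H_n)}A_n})$ pointwise; passing to the balanced metric $h_n=c_{H_n}h(\cdot,\Pi_{H_n}^{-1}\cdot)$ makes the balanced connection metric compatible, so the curvature bound applies to $(h_n,H_n,T_{(h,H_n)}A_n)$ as in (\ref{ineq12}), and the stability bound $|\Pi_{H_n}^{-1}|\leq C$ keeps the comparison factors $c_n$ bounded. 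This upgrades the displayed inequality to $0<c\leq Cc_n^{-2p}\|e^{0,1}(A_n,H_n)\|_{L_{pq}}^p\,\|e^{1,0}(B_n,H_n)\|_{L_{pq'}}^p$, yielding the claim for every $B_n$. The submanifold statement follows by localizing the whole argument to $Z$: the nonzero pairing of the $(0,2p)$ component of $ch_p(E)$ with $[Z]$ supplies the positive constant $c$, and all integrals and norms are then taken over $Z$ with $dv_Z$.

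The step I expect to be the main obstacle is the first paragraph: verifying cleanly that $\tr((F^{0,2})^p)$ is $\db$-closed with Dolbeault class the harmonic $(0,2p)$ representative of $ch_p(E)$, and that the filtration hypothesis is equivalent, via the reality of the Chern character, to the nonvanishing of that corner component. The algebraic bound $|\tr((F^{0,2})^p)|\leq C|F^{0,2}|^p$ and the Hölder bookkeeping are routine, though one must track the dependence of the constants on $p$ and $\dim M$.
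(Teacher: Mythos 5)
Your proposal is correct and takes essentially the same route as the paper: the paper's proof of Theorem \ref{chpe} is exactly to take $p$-th exterior powers of the balanced curvature $F^{0,2}_{T^{0,1}_{(h,H_n)}A_n}$ from (\ref{f02can}) and rerun the argument of Theorem \ref{ch1e} --- pairing $\tr\bigl((F^{0,2})^p\bigr)$ against the harmonic representative of $ch_p(E)$, using the pointwise bound by $(e^{0,1})^{p/2}(e^{1,0})^{p/2}$, applying H\"older with the conjugate exponents $q,q'$, and then passing to arbitrary $B_n$ via the pointwise minimality of the balanced connection together with stability, with the $Z$ case handled by the same localization. Your write-up matches the detailed version the paper gives for Theorem \ref{thm1} (specialized to the corner component $i=p$), and is in fact more explicit than the paper's own one-line derivation of Theorem \ref{chpe}.
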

To go deeper into the Hodge diamond, we need bounds on $F_A^{1,1}.$ Using the stability assumption, we can allow our metrics to vary also. Then from (\ref{canF}) we have 

\begin{equation}\label{canF11}F_{T(h,H)A}^{1,1} =  [\db_{A}s_a \langle\cdot,\Pi^{-1}\db_As_b\rangle +\p_{A}s_a \langle\cdot,\Pi^{-1}\p_As_b\rangle]Q_{ba} .\end{equation}
Let $$\phi_B\omega := \frac{1}{m}e^*(\omega)F_B^{1,1},$$ where $\omega$ denotes the Kahler form. Then 
\begin{equation}\label{trf}m\phi_{T(h,H)A}   =   i[ \langle \db_As_a,\Pi^{-1}\db_As_{b,}\rangle 
-   \langle \p_As_{a},\Pi^{-1} \p_As_{b}\rangle]Q_{ba} .\end{equation}
Hence
$$\deg(E) := \frac{i}{2\pi}\int_M\omega^{m-1}\wedge \tr F_{T(h,H)A}$$
$$ = \frac{(m-1)!}{2\pi}\int_M(\langle \p_As_{a},\Pi^{-1} \p_As_{b}\rangle -  \langle \db_As_a,\Pi_d^{-1}\db_As_{b,}\rangle )Q_{ba}dv .$$
So, we see that a stable sequence with bounded $\Phi_{G}^{0,1}$ must also have bounded $\Phi_G^{1,0},$ and therefore bounded $\Phi_G$. Similar results hold for energy densities integrated over complex submanifolds, where the energy involves only derivatives in directions tangent to the submanifold.     
\begin{theorem}\label{thm0}
If $c_1(E)\not\in F^1H^{2}\cap \bar F^1H^{2}$, the energy, $\Phi^{0,1}$ remains bounded below on stable sequences. 
\end{theorem}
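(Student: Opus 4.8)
The plan is to bound $\Phi^{0,1}$ below by first passing to the balanced semiconnection and then pitting the positivity of the $(0,2)$-part of $c_1(E)$ against the topological rigidity of the degree. This is the ``bounded below'' counterpart of the blow-up statement in Theorem \ref{ch1e}; it reuses the curvature estimate (\ref{ineq10}) but adds the degree identity (\ref{trf}) as the new ingredient.

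First I would reduce to the canonical connection. For fixed $(h,H)$ the balanced semiconnection minimizes $\Phi^{0,1}(h,H,\db_A)$ over all semiconnections, with minimum value $\Phi^{0,1}_G(h,H)$ (since $e^{0,1,G}(h,H,\db_A)=e^{0,1}(h,H,\db_{T^{0,1}_{(h,H)}A})$ and $\Phi^{0,1}_G=\int e^{0,1,G}\,dv$ is semiconnection-independent). Hence $\Phi^{0,1}(h,H,\db_A)\geq\Phi^{0,1}_G(h,H)$, and it suffices to bound $\Phi^{0,1}_G$ below on any stable sequence. I argue by contradiction: suppose $\{H_n\}$ is stable with $\Phi^{0,1}_G(h,H_n)\to 0$. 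By the degree computation of the preceding paragraph (Chern--Weil applied through (\ref{trf}), together with the stability bound), a stable sequence with $\Phi^{0,1}_G$ bounded also has $\Phi_G$ bounded; so along this sequence $\Phi_G(h,H_n)$ stays bounded.

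The heart of the matter is a product lower bound $\Phi^{0,1}_G\cdot\Phi_G\geq c_0>0$. Let $h^{0,2}$ be the harmonic representative of the $(0,2)$-component of $c_1(E)$; since $c_1(E)\notin F^1H^{2}\cap\bar F^1H^{2}=H^{1,1}(M)$ and $c_1(E)$ is real, this component is nonzero, so $\|h^{0,2}\|_{L_2}>0$. Applying (\ref{f02can}) to the balanced connection, $\tr F^{0,2}$ is a smooth $(0,2)$-form whose Dolbeault class is a nonzero multiple of that component, so pairing against the fixed harmonic form gives $0<c\,\|h^{0,2}\|^2=|\langle h^{0,2},\tr F^{0,2}\rangle_{L_2}|\leq\|h^{0,2}\|_{L_\infty}\int_M|\tr F^{0,2}|\,dv$. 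Specializing (\ref{ineq10}) to the balanced connection, where $e^{0,1}=e^{0,1,G}$ and $e=e^{G}$, and using the stability bound $|\Pi^{-1}|\leq C$ to absorb the weight, the trace obeys the pointwise estimate $|\tr F^{0,2}|\leq C\,(e^{0,1,G})^{1/2}(e^{G})^{1/2}$; Cauchy--Schwarz then yields $\int_M|\tr F^{0,2}|\,dv\leq C\,(\Phi^{0,1}_G)^{1/2}(\Phi_G)^{1/2}$. Combining, $\Phi^{0,1}_G\cdot\Phi_G\geq c_0:=\big(c\|h^{0,2}\|^2/(C\|h^{0,2}\|_\infty)\big)^2>0$.

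Finally I combine the two facts. Along the hypothetical sequence $\Phi_G$ is bounded while $\Phi^{0,1}_G\to 0$, so $\Phi^{0,1}_G\cdot\Phi_G\to 0$, contradicting the product bound. Hence $\Phi^{0,1}_G$---and therefore $\Phi^{0,1}$---is bounded below by a positive constant on stable sequences. The main obstacle is precisely this product lower bound: turning the $L_2$ cohomological positivity $\|h^{0,2}\|>0$ into a statement about two $L_1$ energy integrals. The device that makes it work is to pair against the fixed smooth harmonic form, using $L_1$--$L_\infty$ duality rather than the $L_2$ norm of $\tr F^{0,2}$, and then to split the resulting single integral by Cauchy--Schwarz; the stability hypothesis enters only to absorb the $\Pi^{-1}$ weights of (\ref{f02can}) into the genuine energy densities, and the degree rigidity of (\ref{trf}) is what converts the product bound into a bound on $\Phi^{0,1}_G$ alone.
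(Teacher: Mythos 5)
Your proposal is correct and follows the same overall strategy as the paper's proof: reduce to the balanced (canonical) connection, use the degree identity (\ref{trf}) together with stability to conclude that the full energy $\Phi_G$ stays bounded whenever $\Phi^{0,1}_G$ does, and then contradict the nonvanishing of the harmonic $(0,2)$-component of $c_1(E)$ via the curvature formula (\ref{f02can}). The one real difference is in the last step: the paper compresses it into a citation of Theorem \ref{ch1e}, whereas you unfold that argument inline, and in doing so you quietly repair a defect in the paper's inequality (\ref{lwrbnd}). As written, (\ref{lwrbnd}) bounds $\|\tr F^{0,2}_{T^{0,1}_{(h,H_n)}A_n}\|_{L_2}^2$ by $C\,\Phi^{0,1}\,\Phi$, which would require H\"older in the wrong direction, since $\int e^{0,1}e\,dv$ is not controlled by $\bigl(\int e^{0,1}dv\bigr)\bigl(\int e\,dv\bigr)$ when the densities concentrate. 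Your substitute --- pairing against the fixed harmonic representative in $L_\infty$--$L_1$ duality and then splitting $\int (e^{0,1,G})^{1/2}(e^{G})^{1/2}dv$ by Cauchy--Schwarz --- delivers the product bound $\Phi^{0,1}_G\,\Phi_G\geq c_0>0$ honestly, and is what the argument actually needs. One caveat you share with the paper: converting the $\Pi^{-1}$-weighted $(1,0)$-energy appearing in the degree identity into a bound on the unweighted $\Phi_G$ uses stability, which gives $\Pi^{-1}\leq C$ but no pointwise upper bound on $\Pi(x,x)$; this is harmless in your argument, since (\ref{f02can}) naturally carries the weight, so you may keep the $(1,0)$ factor weighted throughout your Cauchy--Schwarz step and bound it directly by the degree identity, bypassing the unweighted comparison entirely.
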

\begin{proof}If $\Phi^{0,1}(h_n,H_n,\db_{A_n})\to 0$ then $\Phi_G^{0,1}(h_n,H_n,\db_{A_n})\to 0$. Hence $\Phi_G(h_n,H_n,d_{A_n})\to 0$. The stability assumption implies then that $\Phi_G(h_n,H_n,d_{T_{h_n,H_n}A_n})\to 0$. This contradicts Theorem \ref{ch1e}. 
\end{proof}

 In $\tr F_A^p$, the only terms with no $\db s_a$ factors are in $\tr (F_A^{1,1})^p$. 
 So using Holder's inequality and choosing exponents optimally we come to the following theorem deeper in the Hodge diamond.  
\begin{theorem}\label{thm1} 
Suppose that $ch_p(E)\not\in F^{p-j+1}H^{2p}(M)\cap \bar F^{p-j+1}H^{2p}$. Let $\{H_n\}_n $ be a stable sequence of subspaces and $\{A_n\}_n $
 be a sequence of smooth hermitian connections. If $e^{0,1}(A_n,H_n)\stackrel{L_{p}}{\rightarrow} 0,$ then 
$e^{1,0}(B_n,H_n)\stackrel{L_{p}}{\rightarrow} \infty,$ for any sequence of hermitian connections $\{B_n\}_n$. 
If $e^{0,1}(A_n,H_n)\stackrel{L_{\infty}}{\rightarrow} 0,$ then 
$e^{1,0}(B_n,H_n)\stackrel{L_{p-\frac{j}{2}}}{\rightarrow} \infty,$ for any sequence of hermitian connections $\{B_n\}_n$.\\
Let $[Z]\in H^{2m-2p}(M,\IZ)$ have nonzero pairing with the $(0,p)\text{ component of }ch_p(E).$
Let $\{H_n\}_n $ be a sequence of admissible subspaces stable on $Z$ and $\{A_n\}_n $
 a sequence of smooth connections. If $e^{0,1}(A_n,V_n)\stackrel{L_{p}(Z)}{\rightarrow} 0,$ then 
$e^{1,0}(B_n,H_n)\stackrel{L_{p}(Z)}{\rightarrow} \infty,$ for any sequence of hermitian connections $\{B_n\}_n$. 
If $e^{0,1}(A_n,H_n)\stackrel{L_{\infty}(Z)}{\rightarrow} 0,$ then 
$e^{1,0}(B_n,H_n)\stackrel{L_{p-\frac{j}{2}}(Z)}{\rightarrow} \infty,$ for any sequence of hermitian connections $\{B_n\}_n$. 
\end{theorem}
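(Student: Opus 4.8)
The plan is to follow the mechanism of Theorem \ref{ch1e}, replacing the single component $F^{0,2}$ by the full off-diagonal Hodge component of $\tr(F^p)$ and then quantifying the gain of anti-holomorphic factors that the hypothesis forces. First I would record the topological input. Since $ch_p(E)$ is a real class, the hypothesis $ch_p(E)\notin F^{p-j+1}H^{2p}\cap\bar F^{p-j+1}H^{2p}$ produces, after conjugation if necessary, a nonzero Hodge component of type $(a,2p-a)$ with $a\leq p-j$. Poincar\'e duality then furnishes a fixed closed form $\eta$ of complementary type $(m-a,m-2p+a)$ with $c_0:=|\int_M\tr(F_\nabla^p)\wedge\eta|>0$; by Chern--Weil this number is independent of the connection $\nabla$ on $E$, and for type reasons only the $(a,2p-a)$-component of $\tr(F^p)$ contributes to it. I would run the computation on the balanced connection $\nabla:=d_{T_{(h,H_n)}A_n}$, whose curvature is given by (\ref{canF}), so that $c_0$ becomes a fixed lower bound for $\int_M|(\tr F_\nabla^p)^{(a,2p-a)}|\,|\eta|\,dv$.

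The heart of the matter is the Hodge-degree count advertised before the statement. Decomposing (\ref{canF}) by type gives $F=F^{2,0}+F^{1,1}_{\p}+F^{1,1}_{\db}+F^{0,2}$, where $F^{1,1}_{\p}=\p_As_a\langle\cdot,\Pi^{-1}\p_As_b\rangle Q_{ba}$ is the unique piece carrying no $\db_As$ factor, and (\ref{f02can}), (\ref{canF11}) identify the rest. In a monomial of $\tr(F^p)$ with $r$ factors $F^{2,0}$, $s_1$ factors $F^{1,1}_{\db}$, $s_2$ factors $F^{1,1}_{\p}$ and $t$ factors $F^{0,2}$ (so $r+s_1+s_2+t=p$), the holomorphic degree is $2r+s_1+s_2$; demanding it be $\leq p-j$ forces $t\geq r+j$, so at least $j$ factors carry a $\db_As$. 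Using stability to bound $|\Pi^{-1}|$ and $|Q|$, together with Cauchy--Schwarz in the $(a,b)$ indices, each factor obeys $|F^{2,0}|,|F^{0,2}|\leq C(e^{0,1}e^{1,0})^{1/2}$, $|F^{1,1}_{\db}|\leq Ce^{0,1}$ and $|F^{1,1}_{\p}|\leq Ce^{1,0}$, with energies taken for $\nabla$. Hence each monomial of the $(a,2p-a)$-component carries a factor $(e^{0,1})^{\beta}$ with $\beta=\tfrac{r+t}{2}+s_1\geq j/2$, and, collecting the remaining factors (of total degree $p-j/2$) into the full energy $e=e^{0,1}+e^{1,0}$, I obtain the pointwise bound $|(\tr F_\nabla^p)^{(a,2p-a)}|\leq C(e^{0,1})^{j/2}\,e^{\,p-j/2}$.

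Next I would pass back to the given connections. Because the $G$-part of each energy is intrinsic, namely $e^{0,1}(h,H_n,\db_\nabla)=e^{0,1,G}$ is independent of the semiconnection and $\leq e^{0,1}(A_n,H_n)$, and likewise $e^{1,0}(h,H_n,\p_\nabla)=e^{1,0,G}\leq e^{1,0}(B_n,H_n)$ for every $B_n$, I may replace the $\nabla$-energies in the pointwise bound by $e^{0,1}(A_n,H_n)$ and $e^{0,1}(A_n,H_n)+e^{1,0}(B_n,H_n)$. Integrating and applying H\"older with the conjugate exponents $\tfrac{2p}{j}$ and $\tfrac{2p}{2p-j}$ gives $c_0\leq C\,\|e^{0,1}(A_n,H_n)\|_{L_p}^{j/2}\,\|e^{0,1}(A_n,H_n)+e^{1,0}(B_n,H_n)\|_{L_p}^{p-j/2}$; since $\|e^{0,1}(A_n,H_n)\|_{L_p}\to0$ this forces $\|e^{1,0}(B_n,H_n)\|_{L_p}\to\infty$, the first assertion. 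For the second I instead pull out the sup norm, obtaining $c_0\leq C\,\|e^{0,1}(A_n,H_n)\|_\infty^{j/2}\,\|e^{0,1}(A_n,H_n)+e^{1,0}(B_n,H_n)\|_{L_{p-j/2}}^{p-j/2}$, and $\|e^{0,1}(A_n,H_n)\|_\infty\to0$ yields $\|e^{1,0}(B_n,H_n)\|_{L_{p-j/2}}\to\infty$. The submanifold statements follow by localizing the same argument to $Z$: one pairs the restriction of $\tr(F^p)$ with the class dual to $[Z]$, invokes stability on $Z$, and retains only derivatives tangent to $Z$ in the energies.

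The main obstacle is the bookkeeping of the second paragraph: verifying that the holomorphic-degree constraint $2r+s_1+s_2\leq p-j$ really forces the anti-holomorphic factors in the stated numbers, and, more delicately, that after factoring out $(e^{0,1})^{j/2}$ the residual degree is exactly $p-j/2$, so that the two H\"older exponents land precisely on $L_p$ and $L_{p-j/2}$. Everything else, namely the connection-independent topological lower bound, the pointwise factor estimates, and the passage from $\nabla$ to arbitrary $A_n$ and $B_n$, is a direct transcription of the argument already used in the proof of Theorem \ref{ch1e}.
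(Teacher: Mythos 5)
Your proposal is correct and follows essentially the same route as the paper's proof: the curvature formulas (\ref{f02can}), (\ref{canF11}) for the canonical connection $T_{(h,H_n)}A_n$, a fixed topological lower bound on the off-diagonal Hodge component of $\tr F^p$ supplied by the hypothesis, a Hodge-degree count forcing at least $j$ anti-holomorphic factors (hence a factor $(e^{0,1})^{j/2}$), H\"older's inequality, and the $G$-energy/stability comparison from Theorem \ref{ch1e} to pass from the canonical connection to arbitrary $B_n$. The only cosmetic differences are that you pair $\tr (F^p)$ against a Poincar\'e-dual closed form in $L_1$--$L_\infty$ rather than against the harmonic representative in $L_2$, and you consolidate the paper's sum over monomials $|F^{0,2}|^a|F^{1,1}|^b$ into the single pointwise bound $C(e^{0,1})^{j/2}e^{p-j/2}$, which if anything makes the H\"older exponents land more transparently on $L_p$ and $L_{p-j/2}$.
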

\begin{proof}Let $h$ be the harmonic representative of $\tr F_{ T_{(h,H_n)}A_n}^p$. Then the assumption on the location of $ch_p(E)$ in the Hodge filtration implies that the component $h^{p-i,p+i}$ of $h$ of bidegree $(p-i,p+i)$ is nonzero for some $i\geq j$. Then 
$$\|h^{p-i,p+i}\|_{L_2}^2 =  \langle h^{p-i,p+i},\tr F_{ T_{(h,H_n)}A_n}^p\rangle_{L_2}\leq \|h^{p-i,p+i}\|_{L_2}\|(\tr F_{ T_{(h,H_n)}A_n}^p)^{p-i,p+i}\|_{L_2}.$$
Hence 
$$\|h^{p-i,p+i}\|_{L_2} \leq  \|(\tr F_{ T_{(h,H_n)}A_n}^p)^{p-i,p+i}\|_{L_2}\leq c\sum_{a+b=p, a\geq i}\||F^{0,2}_{T^{0,1}_{(h,H_n)}A_n}|^a|F_{T_{(h,H_n)}A_n}^{1,1}|^b\|_{L_2}$$
$$\leq c_n \sum_{i\leq a\leq p-i}\sqrt{\int e^{0,1}(h,H_n,\db_{A_n})^{p-a}e^{1,0}(h,H_n,\p_{A_n})^{p+a}dv}.$$
Now apply Holder's inequality to deduce the desired results, with $B_n=A_n$. Then, arguing as in Theorem \ref{ch1e}, the result for  $A_n = T_{(h,H_n)A_n}$ implies the result for arbitrary $B_n$. 
\end{proof}

\section{Gauge Transformations}
Let $u$ be a unitary gauge transformation. Then 
$$\Phi^{0,1}(h,H,\db_A) = \Phi^{0,1}(h,u^{-1}H,u^{-1}\db_Au).$$
If $U$ is a complex gauge transformation, then 
$$\Phi^{0,1}(h,H,\db_A) = \Phi^{0,1}(h(U\cdot,U\cdot),U^{-1}H,U^{-1}\db_AU).$$
 
\begin{definition}We call a triple $(h,H,\db_A)$ a {\em real gauge soliton} if 
$$T^{0,1}(h,H,\db_A) = (h,u^{-1}H,u^{-1}\db_Au),$$
for some unitary gauge transformation $u$. \\
We call a triple $(h,H,\db_A)$ a {\em complex gauge soliton} if 
$$T^{0,1}(h,H,\db_A) = (h(U\cdot,U\cdot),U^{-1}H,U^{-1}\db_AU),$$
for some complex gauge transformation $U$. 
\end{definition}
Hence solitons are fixed points of the dynamical system, modulo gauge equivalence. 
We observe that 
$$\db_{T^{0,1}(h,H)A} = \db_A - [\db_A,\Pi(x,x)]\Pi^{-1}(x,x) + s_a\langle ,\Pi^{-1}\p_A s_A\rangle.$$
Hence $\db_{T^{0,1}(h,H)A}$ is complex gauge equivalent to 
$  \db_A    +  \Pi^{-1}s_a\langle ,\p_A s_A\rangle .$
So, we see that a sufficient condition for the sequence $\{A_n\}_n$ to be a complex gauge soliton is that the  
$$\Phi^{1,0}(h,H,\p_A ) =\Phi^{1,0}_G(h,H,\p_A ).$$
Solitons never take us closer to holomorphicity since they are evolution by complex gauge transformations.

If we wish to speed convergence of the dynamical system, it is natural to add a unitary gauge transformation at each step.  Let $u_n$ be unitary endomorphism of the bundle which minimizes 
$$\|u_ndb_{A_{n+1}}u_n^{-1}-\db_{A_n }\|^2 = \|db_{A_{n+1}}-u_n^{-1}\db_{A_n }u_n\|^2= \|(A_{n+1}-A_n)-u_n^{-1}[\db_{A_n},u_n] \|^2.$$

\section{Singular Canonical Connections}
When $H$ does not globally generate $E$, it does not define a canonical map to a Grassmannian, and the discrete dynamical systems generated by $T$ and $T^{0,1}$ terminate. In this case, we can still construct an approximate canonical connection as follows. Let 
$$\Pi_\epsilon: = \Pi+\epsilon I.$$
Define 
the semiconnection 
$$\db_{T_\epsilon^{0,1}(h,H)A} := \db_A-\db_As_a\langle\cdot,\Pi_\epsilon^{-1}s_a\rangle.$$
Then 
$$F_{T_\epsilon^{0,1}(h,H)A}^{0,2}  = F_A^{0,2}   -F_A^{0,2}s_a\langle\cdot,\Pi_\epsilon^{-1}s_a\rangle 
+\db_As_a\langle\cdot,\Pi_\epsilon^{-1}\p_As_b\rangle (\delta_{ab}- \langle s_b,\Pi_{\epsilon}^{-1}s_a\rangle)$$
$$= F_A^{0,2}\epsilon\Pi_\epsilon^{-1} +\db_As_a\langle\cdot,\Pi_\epsilon^{-1}\p_As_b\rangle (\delta_{ab}- \langle s_b,\Pi_{\epsilon}^{-1}s_a\rangle).$$
Similarly define the connection
$$d_{T_\epsilon(h,H)A} := d_A-d_As_a\langle\cdot,\Pi_\epsilon^{-1}s_a\rangle .$$
This connection is hermitian with respect to the metric 
$$\langle\cdot, \cdot\rangle_\epsilon:=\langle\cdot,\Pi_\epsilon^{-1}\cdot\rangle.$$
Therefore $\Pi_\epsilon^{-1/2}d_{T_\epsilon(h,H)A}\Pi_\epsilon^{1/2}$ is hermitian with respect to the orginal metric. 
We have 
$$F_{T_\epsilon(h,H)A}  =  F_A\epsilon\Pi_\epsilon^{-1} +d_As_a\langle\cdot,\Pi_\epsilon^{-1}d_As_b\rangle (\delta_{ab}- \langle s_b,\Pi_{\epsilon}^{-1}s_a\rangle),$$
and 
\begin{equation}\label{whatsup}\Pi_\epsilon^{-1/2}F_{T_\epsilon(h,H)A}\Pi_\epsilon^{ 1/2}   = \Pi_\epsilon^{-1/2}F_A\epsilon\Pi_\epsilon^{-1/2} +\Pi_\epsilon^{-1/2}d_As_a\langle\cdot,\Pi_\epsilon^{-1/2}d_As_b\rangle (\delta_{ab}- \langle s_b,\Pi_{\epsilon}^{-1}s_a\rangle).\end{equation}
Let $K(x)$ denote the orthogonal projection onto the kernel of $s_a(x)\langle\cdot, s_a(x)\rangle.$ Taking the limit as $\epsilon\to 0$, 
\begin{equation}\label{doc}\Pi_\epsilon^{-1/2}F_A\epsilon\Pi_\epsilon^{-1/2}\to KF_AK.\end{equation}

These curvature formulas can now be used to find partial extensions of Theorems \ref{thm0} and \ref{thm1} to unstable sequences, but we will not pursue that application here.  They also can be used to gain insight into mass gap questions. For example, if $A_n$ is a sequence of irreducible $su(2)$ Yang Mills minimizing connections and the first eigenvalue  of $d_{A_n}^*d_{A_n}$ approaches zero, then the $L_4$ norm of $d_{A_n}s_n$, with $s_n$ a normalized first eigensection must 
blowup; otherwise, the equation (\ref{whatsup}) with $H=\langle s_n\rangle$ contradicts minimality of $\|F_{A_n}\|^2.$

\section{Entropy}
In this section we consider canonical maps generated by heat kernels. 
As is perhaps known to experts (see \cite{MM},\cite{T},\cite{Z}), the canonical connections induced by these maps approximate the original connection as $t\to 0$. This computation  is not motivated by our Hodge theoretic questions; it is simply an amusing application of the expressions for the curvature of connections induced by maps to Grassmannians.

Given a metric $h$, a connection $d_A$, and an endomorphism $r$ of $E$, let $L= d_A^*d_A+r$. Let $\{\psi_j\}_{j=1}^\infty$ be an $L_2$ orthonormal basis of eigensections of $L$, $L\psi_j = \lambda_j$. Let $H_{N}$ denote the span of the eigenvectors of $L$ with  eigenvalue $\leq N$. Instead of considering the orthogonal projection onto $H_N$, we set 
$$\Pi_{N,t}:= \sum_{\lambda_j\leq N}e^{-t\lambda_j}\psi_j\langle\cdot,\psi_j\rangle.$$
This is the composition of the projection onto $H_N$ and $e^{-tL}$. Then 
$$\lim_{N\to\infty} \Pi_{N,t} = k_t,$$
where $k_t$ denotes the Schwartz kernel for $e^{-tL}$. 
Standard heat equation asymptotics give 
$$k_t(x,x) =(4\pi t)^{-n/2}I + O(t^{1-n/2}),$$
with explicitly computable lower order terms. 
Hence for $N = N(t)$ sufficently large, $\Pi_{N,t}(x,x)$ is invertible with 
$$\Pi_{N,t}^{-1}(x,x) = (4\pi t)^{n/2}I + O(t^{1+ n/2}).$$
Then we have the corresponding Grassmann embedding defined by 
$$V_{N,t} = V_{ab,N,t}e_b\otimes e_a^* = 
\langle e^{-t\lambda_a/2}\psi_a,\Pi_{N,t}^{-1}e^{-t\lambda_b/2}\psi_b\rangle e_b\otimes e_a^*.$$
The pullback of the canonical connection is then given by  
$$d_{A,N,t}:= d_A - \sum_{\lambda_j\leq N}e^{-t\lambda_j}d_A\psi_j\langle \cdot, \Pi_{N,t}^{-1}\psi_j\rangle,$$ 
and has curvature 
$$F_{A,N,t} =  \sum_{\lambda_a,\lambda_b\leq N}Q_{ab,N,t}e^{-t\lambda_a/2}d_A\psi_a\langle \cdot, \Pi_{N,t}^{-1}e^{-t\lambda_b/2}d_A\psi_b\rangle,$$
with $Q_{N,t} = I-V_{N,t}. $
On the other hand 
$$\sum_{\lambda_j }e^{-t\lambda_j}d_A\psi_j\langle \cdot, \Pi_{N,t}^{-1}\psi_j\rangle = d_Ak_t(x,y)_{|x=y}\circ \Pi_{N,t}^{-1}(x,x),$$
and from heat equation asymptotics
\begin{equation}\label{lowerorder}|d_Ak_t(x,y)_{|x=y}\circ \Pi_{N,t}^{-1}(x,x)|_{C^1}= O(t).\end{equation}
Hence we see that for $N$ sufficiently large relative to $t^{-1}$ (or $N=\infty$), we have 
$$F_{A} =  \sum_{\lambda_a,\lambda_b\leq N}Q_{ab,N,t}e^{-t\lambda_a/2}d_A\psi_a\langle \cdot, \Pi_{N,t}^{-1}e^{-t\lambda_b/2}d_A\psi_b\rangle + O(t)   $$
$$=  (d_Ak_t(x,y)d_A^*)_{x=y}\Pi_{N,t}^{-1}(x,x) - d_Ak_t(x,y)_{x=y}\Pi_{N,t}^{-1}(x,x)(k_t(y,x)d_A^*)_{y=x}\Pi_{N,t}^{-1}(x,x) + O(t).  $$
Subtracting off the $d_Ak_t(x,y)_{x=y}\Pi_{N,t}^{-1} (k_t(y,x)d_A^*)_{y=x}\Pi_{N,t}^{-1} $ term in the preceding expression induces the projection $Q$. By (\ref{lowerorder}) this term is $O(t^2)$. Hence 
\begin{equation}\label{heatasymp}F_A =  (d_Ak_t(x,y)d_A^*)_{x=y}\Pi_{N,t}^{-1}(x,x)  + O(t) . \end{equation}
This gives the following theorem.
\begin{theorem}\label{entropy}
\begin{equation}\label{entreq}F_{A } =  (4\pi t)^{n/2}\sum_{\lambda_a}e^{-t\lambda_a }d_A\psi_a\langle \cdot,   d_A\psi_a\rangle + O(t) .  \end{equation}
\end{theorem}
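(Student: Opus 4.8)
The plan is to read the identity (\ref{entreq}) directly off equation (\ref{heatasymp}), which already carries the bulk of the work; what remains is to substitute the spectral form of the heat kernel together with the on-diagonal asymptotics of $\Pi_{N,t}^{-1}$, and then to confirm that the resulting error is genuinely $O(t)$. I emphasize that, since $F_A$ is a two-form, every equality below is read at the level of two-forms, i.e.\ one retains only the antisymmetric part of the double-derivative expressions.

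First I would record the spectral representation $k_t(x,y) = \sum_a e^{-t\lambda_a}\psi_a(x)\langle\cdot,\psi_a(y)\rangle$, valid in the limit $N\to\infty$ (equivalently for $N$ large relative to $t^{-1}$, as in the derivation of (\ref{heatasymp})). Applying $d_A$ in each slot and restricting to the diagonal gives
$$(d_Ak_t(x,y)d_A^*)_{x=y} = \sum_a e^{-t\lambda_a}d_A\psi_a\langle\cdot, d_A\psi_a\rangle,$$
so the sum in (\ref{entreq}) is precisely this diagonal double-derivative. Inserting the inverse asymptotics $\Pi_{N,t}^{-1}(x,x) = (4\pi t)^{n/2}I + O(t^{1+n/2})$ into (\ref{heatasymp}) then yields
$$F_A = (4\pi t)^{n/2}\sum_a e^{-t\lambda_a}d_A\psi_a\langle\cdot, d_A\psi_a\rangle + \Big(\sum_a e^{-t\lambda_a}d_A\psi_a\langle\cdot, d_A\psi_a\rangle\Big)O(t^{1+n/2}) + O(t),$$
and the first summand is exactly the claimed main term.

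The one substantive point is that the middle term is $O(t)$, and I expect this to be the crux. Naively $(d_Ak_td_A^*)_{x=y}$ has an on-diagonal singularity of order $t^{-n/2-1}$, which would only give $O(1)$; the saving is that the leading $t^{-n/2-1}$ coefficient in the heat asymptotics is a multiple of the metric tensor times the identity endomorphism, hence symmetric in its two form-indices, so it drops out upon antisymmetrization. The surviving two-form part is therefore only $O(t^{-n/2})$ — indeed its order is pinned down by the main term, since $(4\pi t)^{n/2}$ times it must limit onto $F_A$. Multiplying this $O(t^{-n/2})$ two-form by $O(t^{1+n/2})$ produces $O(t)$, as required, and collecting the leading term gives (\ref{entreq}). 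This cancellation of the symmetric leading singularity is the standard mechanism by which on-diagonal heat-kernel derivatives reproduce curvature, and it is also what fixes the constant so that the two-form part of $(4\pi t)^{n/2}(d_Ak_td_A^*)_{x=y}$ converges to $F_A$ rather than diverging. As an independent check, and the route flagged in the text, I would note that the same identity follows purely from heat-equation asymptotics by expanding $d_Ak_t(x,y)d_A^*$ near the diagonal via the Minakshisundaram--Pleijel coefficients and matching the $O(1)$ term with $F_A$, which bypasses the canonical-map derivation but exhibits the identical cancellation.
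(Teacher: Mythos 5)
Your proposal is correct and takes essentially the same route as the paper: the paper, too, obtains the theorem from (\ref{heatasymp}) by inserting the spectral representation $(d_Ak_t(x,y)d_A^*)_{x=y}=\sum_a e^{-t\lambda_a}d_A\psi_a\langle\cdot,d_A\psi_a\rangle$ together with the expansion $\Pi_{N,t}^{-1}(x,x)=(4\pi t)^{n/2}I+O(t^{1+n/2})$. The one step the paper leaves implicit --- that the two-form (antisymmetrized) part of $(d_Ak_td_A^*)_{x=y}$ is only $O(t^{-n/2})$ rather than the naive $O(t^{-n/2-1})$, so the cross term is genuinely $O(t)$ --- is exactly the cancellation you identify, and your bootstrap of this bound from (\ref{heatasymp}) itself (valid because composition with the self-adjoint endomorphism error commutes with antisymmetrization in the form indices) closes it rigorously.
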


Formally, we see in (\ref{heatasymp}) that $F_A$ scales like an entropy - more accurately a thermodynamic energy- the trace of the time derivative of $e^{-tL}$ divided by the trace of $e^{-tL}$.

%
 \newcommand{\etalchar}[1]{$^{#1}$}
\def\polhk#1{\setbox0=\hbox{#1}{\ooalign{\hidewidth
  \lower1.5ex\hbox{`}\hidewidth\crcr\unhbox0}}}
\providecommand{\bysame}{\leavevmode\hbox to3em{\hrulefill}\thinspace}

%
\end{document}